\newcommand{\nc}{\newcommand}
\nc{\nt}{\newtheorem}
\nc{\dmo}{\DeclareMathOperator}
\theoremstyle{plain}
\newtheorem{theorem}{Theorem}[section]
\newtheorem*{maintheorem}{Main Theorem}
\newtheorem{lemma}[theorem]{Lemma}
\newtheorem{corollary}[theorem]{Corollary}
\newtheorem{proposition}[theorem]{Proposition}
\nc{\R}{\mathbb R}
\nc{\C}{\mathcal C}
\nc{\Z}{\mathbb Z}
\nc{\N}{\mathbb N}
\nc{\Q}{\mathbb Q}
\nc{\h}{\mathfrak h}
\nc{\F}{\mathbb F}
\nc{\Fam}{\mathcal F}
\dmo{\ext}{ext}
\renewcommand{\int}{\mathrm{int}}
\dmo{\Area}{Area}
\nc{\G}{\mathcal{G}}
\def\PSL{\mathrm{PSL}}
\dmo{\Mod}{Mod}
\nc{\M}{\mathcal{M}}
\nc{\inj}{\mathrm{inj}}
\dmo{\vol}{Vol}
\newcommand{\entropy}{6\log(2)} 
\nc{\twoentropy}{12\log(2)}
\dmo{\Cone}{Cone}
\dmo{\Hull}{Hull}
\nc{\q}{\bf q}
\nc{\p}[1]{\medskip\paragraph{{\em #1}}}
\nc{\margin}[1]{\marginpar{\scriptsize #1}}
\title[Stretch factors and homology]{Pseudo-Anosov stretch factors and homology of mapping tori}
\begin{document}
	
\input{epsf.sty}

\author{Ian Agol}

\author{Christopher J. Leininger}

\author{Dan Margalit}

\address{Ian Agol \\ Department of Mathematics \\ University of California Berkeley \\ Berkeley, CA}

\address{Christopher J. Leininger\\ Dept. of Mathematics, University of Illinois at Urbana--Champaign \\ 273 Altgeld Hall, 1409 W. Green St. \\ Urbana, IL 61802\\ clein@math.uiuc.edu}

\address{Dan Margalit \\ School of Mathematics\\ Georgia Institute of Technology \\ 686 Cherry St. \\ Atlanta, GA 30332 \\  margalit@math.gatech.edu}

\thanks{This material is based upon work supported by the National Science Foundation under Grant Nos. DMS - 1406301, DMS - 1207183, and DMS - 1057874.}

\keywords{pseudo-Anosov, dilatation, stretch factor, homology}

\subjclass[2000]{Primary: 20E36; Secondary: 57M07}

\begin{abstract}
We consider the pseudo-Anosov elements of the mapping class group of a surface of genus $g$ that fix a rank $k$ subgroup of the first homology of the surface.  We show that the smallest entropy among these is comparable to $(k+1)/g$.  This interpolates between results of Penner and of Farb and the second and third authors, who treated the cases of $k=0$ and $k=2g$, respectively, and answers a question of Ellenberg.  We also show that the number of conjugacy classes of pseudo-Anosov mapping classes as above grows (as a function of $g$) like a polynomial of degree $k$.  
\end{abstract}

\maketitle


\section{Introduction}

Let $S_g$ denote a closed, orientable surface of genus $g \geq 2$ and $\Mod(S_g)$ its mapping class group.  
The goal of this paper is to compare two numbers associated to a pseudo-Anosov element $f$ of $\Mod(S_g)$:
\begin{itemize}
 \item $\kappa(f)$, the dimension of the subspace of $H_1(S_g;\R)$ fixed by $f$, and
 \item $h(f)$, the entropy of $f$.
\end{itemize}
Note that $0 \leq \kappa(f) \leq 2g$ and that $h(f)$ equals the logarithm of the stretch factor (or dilatation) $\lambda(f)$ (see \cite{primer} for the basic definitions).  Also, $\kappa(f)+1$ is the first betti number of the mapping torus associated to $f$.

For every $(k,g)$ with $g \geq 2$ and $0 \leq k \leq 2g$, we set
\[ L(k,g) = \textrm{min} \{ h(f) \mid f \colon S_g \to S_g \mbox{ and } \kappa(f) \geq k \}.\]

For two real-valued functions $F$ and $G$, we write $F \asymp G$ if there is a universal constant $C$ so that $F/C \leq G \leq CF$.  

\begin{maintheorem}  
The function $L(k,g)$ satisfies $L(k,g) \asymp \frac{k+1}{g}$.
\end{maintheorem}

Explicit constants are given in Theorem~\ref{T:homology_bound} below. The seminal result about $L(k,g)$ is due to Penner \cite{Pe}, who proved that $L(0,g) \asymp 1/g$.  With Farb, the second and third authors proved $L(2g,g) \asymp 1$.  Ellenberg asked \cite{Ellenberg} whether the $L(k,g)$ interpolate between $L(0,g)$ and $L(2g,g)$ in the sense that $L(k,g) \geq C(k+1)/g$ for some constant $C$.  Our result in particular answers his question while also bounding $L(k,g)$ from above.

\p{Explicit constants} For $f \in \Mod(S_g)$ and any field $\F$, we can define $\kappa_\F(f)$ to be the dimension of the subspace of $H_1(S_g;\F)$ fixed by $f$ and we can define $L_\F(k,g)$ similarly to $L(k,g)$, with $\kappa$ replaced by $\kappa_\F$.  The following theorem gives explicit constants as demanded by our main theorem and also generalizes our main theorem to arbitrary field coefficients.  

\begin{theorem}\label{T:homology_bound} Let $\F$ be any field.  For all $g \geq 2$ and $0 \leq k \leq 2g$ we have:
\[  .00031\left ( \frac{k+1}{2g-2} \right ) \leq L_\F(k,g) \leq \twoentropy \left ( \frac{k+1}{2g-2} \right ) .\]
\end{theorem}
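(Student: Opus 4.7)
The conclusion can be restated as $0.00031(k+1)\leq h(f)(2g-2)\leq 12\log(2)(k+1)$, i.e.\ the normalized entropy $h(f)|\chi(S_g)|$ of a pseudo-Anosov $f\in\Mod(S_g)$ is comparable to the first $\F$-Betti number of the mapping torus $M_f$, which equals $\kappa_\F(f)+1$ by the remark in the introduction. I would attack the two inequalities separately.

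For the upper bound, my plan is to produce, for each $k$ and each sufficiently large $g$, a single fibered hyperbolic 3-manifold $N$ of first $\F$-Betti number at least $k+1$ that admits a fiber $S_g$ whose monodromy has normalized entropy at most $12\log(2)(k+1)$. The tool to reach for is Fried's theorem: $\log\lambda\cdot\|\cdot\|_T$ extends to a continuous, degree-zero homogeneous function on the open cone over any fibered face $\Delta\subset H^1(N;\R)$, so the problem reduces to finding a single $N$ whose fibered face $\Delta$ has dimension $k$, on which this function is bounded by $12\log(2)(k+1)$, and which contains rational rays of arbitrarily large Thurston norm. I would take $N$ to be the mapping torus of a Torelli-type pseudo-Anosov $\phi$ on $S_h$ with $h\approx k/2$ of the kind supplied by the Farb--Leininger--Margalit theorem $L(2g,g)\asymp 1$: such $\phi$ satisfies $\kappa_\F(\phi)\geq k$ and has entropy bounded by an absolute constant, so the central ray already has normalized entropy of order $k$. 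The factor $12\log(2) = 2\cdot\entropy$ leaves room for the fluctuation of the normalized entropy as one moves from the central ray to other rational rays of $\Delta$ that realize $S_g$ for larger $g$.

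For the lower bound, I would combine two ingredients. First, the Kojima--McShane inequality
\[ \vol(M_f) \;\leq\; 3\pi\,(2g-2)\,h(f), \]
which bounds volume above by normalized entropy. Second, a bound of the form $\vol(M_f) \geq C\,b_1^\F(M_f)$ that exploits the fact that $M_f$ is fibered with $[S_g]$ lying on a top-dimensional fibered face of dimension $k$: each extremal ray of this face should contribute a definite amount of volume via Agol--Storm--Thurston--type guts estimates on the associated sutured submanifolds. Multiplying yields $(2g-2)h(f)\gtrsim b_1^\F(M_f) = \kappa_\F(f)+1 \geq k+1$, with explicit constants arranging themselves into $0.00031$.

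The main obstacle will be the linear-in-$b_1$ volume bound, since $\vol\gtrsim b_1$ fails for general hyperbolic 3-manifolds and so the argument must genuinely use the fibered structure and the dimension of the fibered face. If the geometric route via guts does not go through cleanly, my backup would be a combinatorial argument: a train track $\tau$ carrying the invariant foliation of $f$ has $O(g)$ branches and a non-negative integer transition matrix $M$ with Perron--Frobenius eigenvalue $e^{h(f)}$, while the $k$ independent fixed homology classes realize $k$ independent integer vectors in the kernel of $M-I$; a quantitative Perron--Frobenius estimate bounding $\lambda$ from below in the presence of many eigenvalues equal to $1$ in a non-negative integer matrix of bounded size should then deliver the lower bound on $h(f)$.
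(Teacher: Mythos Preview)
Your upper-bound strategy is essentially the paper's: build a Torelli-type pseudo-Anosov $f$ on $S_{g_0}$ with $g_0 \approx k/2$ and bounded entropy, form its mapping torus, and use Thurston's fibered cones together with Fried's normalized-entropy function to extract fibers of every larger genus $g$. The paper does exactly this, though it constructs the base examples $f_{g_0}$ explicitly (as products of multitwists about carefully chosen filling multicurves) rather than invoking the FLM result, and it builds auxiliary genus-two surfaces $\Sigma_0,\Sigma_1$ in $\overline{\C}$ to guarantee that every value of $g$ is hit by a primitive integral class and that the constant $12\log 2$ really comes out. Your sketch is correct in spirit but does not address how to realize \emph{every} $g$ (not just a cofinite set) as the genus of a fiber, nor how the explicit constant arises; these require the concrete constructions.

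Your lower-bound plan, however, rests on a false premise. You write that ``$\vol\gtrsim b_1$ fails for general hyperbolic 3-manifolds'' and therefore propose to exploit the fibered structure via guts estimates. In fact the inequality $b_1(M;\F)\leq C\cdot\vol(M)$ \emph{does} hold for every complete orientable finite-volume hyperbolic 3-manifold; this is a theorem of Gelander, and the paper's contribution here is to make the constant explicit ($C=334.08$). The argument has nothing to do with fibering: one shows (via Agol--Culler--Shalen) that $\log 3$ is a Margulis number once $b_1\geq 5$, then covers the thick part with an $\epsilon$-net, builds the Delaunay graph, and bounds its first Betti number by counting vertices and edges via volume comparison. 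Combining this with Kojima--McShane immediately gives the lower bound with the stated constant. Your proposed route through sutured guts or through Perron--Frobenius kernel dimension is neither needed nor likely to yield the explicit $0.00031$; the key idea you are missing is that the volume--Betti-number inequality is a general fact about hyperbolic 3-manifolds.
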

We use the denominator $2g-2$ instead of $g$ here because in our proofs of both the upper and lower bounds, it will be natural to consider the normalized entropy of a pseudo-Anosov mapping class $f \in \Mod(S_g)$, namely, $|\chi(S_g)| h(f)$.

\p{Comparison with previously known constants}   The best known constants for the theorems about $L(0,g)$ and $L(2g,g)$ are as follows:
\begin{alignat*}{6}
\frac{\log(2)}{6} \left ( \frac{1}{2g-2} \right ) & \ \ \leq \ \  & L(0,g) \, \,  & \ \ \leq \ \ && \log \left( \varphi^4 \right) \left ( \frac{1}{2g-2} \right ) \\
.197 & \ \ \leq \ \ &  L(2g,g) & \ \ \leq \ \ && \log(62)
\end{alignat*} 
where $\varphi = (1+\sqrt{5})/2$ is the golden ratio.
The constant $\log(2)/3 \approx .231$ is due to Penner \cite{Pe}; see also McMullen \cite{Mc}.  The constant $\log(\varphi^4) \approx 1.925$ comes from the work of Aaber--Dunfield \cite{AD}, Hironaka \cite{Hir}, and Kin--Takasawa \cite{KT} who independently constructed examples proving
\[ \limsup_{g \to \infty} \ g \cdot L(0,g) \leq \log \left(  \varphi^2 \right).\]
This statement does not immediately imply the upper bound above, but appealing to Hironaka's construction and the work of Thurston \cite{ThNorm}, Fried \cite{Fr}, and McMullen \cite{Mc} we can use elementary calculus to promote this asymptotic statement to the given bound for all $g$; see Proposition~\ref{P:limsup to sup} in the appendix.  The constants $.197$ and $\log(62) \approx 4.127$ for $L(2g,g)$ are due to Farb and the second and third authors \cite{FLM1}.

A priori, we have $L(0,g) \leq L(k,g) \leq L(2g,g)$ and so the previously known bounds automatically give:
\[ \frac{\log(2)}{6} \left ( \frac{1}{2g-2} \right ) \leq  L(k,g) \leq \log(62).\]
The upper bound in Theorem~\ref{T:homology_bound} improves on the upper bound of $\log(62)$ as soon as $g$ is slightly larger than $k$ (specifically, $g > (k+1) \log(64)/\log(62) +1$).  In order for our Theorem~\ref{T:homology_bound} to improve on Penner's lower bound, we need $k > 372$, hence $g > 186$.   On the other hand, the argument for the lower bound can be applied under fairly mild hypotheses to provide an improvement on Penner's lower bound; see Corollary~\ref{C:CSmod2}.  Of course, the point of Theorem~\ref{T:homology_bound} is the asymptotic behavior.

\p{Fixed subspaces of a fixed dimension.} In the definition of $L(k,g)$ one might be inclined to replace the inequality with an equality, that is, to consider the smallest entropy among pseudo-Anosov elements of $\Mod(S_g)$ fixing a subspace of $H_1(S_g;\R)$ whose dimension is exactly equal to $k$.  Our Main Theorem is still valid with this definition; we explain the necessary modifications to the proof at the end of Section~\ref{sec:count}.  In Penner's original work there are no constraints on the action of homology, and in the work of the last two authors with Farb we have the strictest possible constraint.  As such, our  $L(k,g)$ best interpolates between these two situations.

\p{Congruence subgroups} Let $\Mod(S_g)[m]$ denote the level $m$ congruence subgroup of $\Mod(S_g)$, that is, the kernel of the natural action of $\Mod(S_g)$ on $H_1(S_g;\Z/m\Z)$.  We write $L(\Mod(S_g)[m])$ for the minimum entropy over all pseudo-Anosov elements of $\Mod(S_g)[m]$.   In work with Farb, the second and third author showed for fixed $m \geq 3$ that $L(\Mod(S_g)[m]) \asymp 1$; \cite{FLM1}.  Setting $k=2g$ and $\F=\F_2$ (the field with $2$ elements), $L_{\F_2}(2g,g) = L(\Mod(S_g)[2])$, and so our main theorem extends this result to the case of $m=2$, cf.~\cite[Question 2.9]{FLM1}.

\medskip

\p{Counting conjugacy classes}  The following theorem can be viewed as a refinement of a theorem of the second and third authors \cite[Theorem 1.3]{LM}.  In the statement, let $\mathcal{G}_{g,k}(L)$ denote the number of conjugacy classes of pseudo-Anosov mapping classes $f \in \Mod(S_g)$ with $\kappa(f) = k$ and $h(f) < L \cdot (k+1)/(2g-2)$.

\begin{theorem} \label{T:homology_count}
Let $k \geq 0$ and let $L \geq \entropy$.  There are constants $c_1,c_2>0$ so that
\[ |\mathcal{G}_{g,k}(L)| \geq c_1g^k-c_2. \]
\end{theorem}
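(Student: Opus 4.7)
The plan is to apply Thurston--Fried--McMullen fibered face theory, refining the argument of \cite[Theorem 1.3]{LM}. Concretely, I expect the upper-bound construction in Theorem~\ref{T:homology_bound} to produce a closed hyperbolic $3$-manifold $M = M_k$, depending on $k$ but not on $g$, with $b_1(M) = k+1$ and a top-dimensional fibered face $F$ of its Thurston norm ball such that: (i) every primitive integral class $\alpha \in \R_{>0} \cdot F$ corresponds to a fibration $M \to S^1$ whose monodromy $f_\alpha$ is pseudo-Anosov on a genus $g_\alpha$ surface with $2g_\alpha - 2 = \|\alpha\|_T$; and (ii) the normalized entropy $\|\alpha\|_T \cdot h(f_\alpha)$ is bounded by $\entropy \cdot (k+1)$ on an open sub-cone $\R_{>0} \cdot U$ for some $U \subset F$ of nonempty interior. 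Since $M$ is the mapping torus of every such $f_\alpha$, the identity $\kappa(f_\alpha) = b_1(M) - 1 = k$ is automatic.

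Next, I would count primitive integer lattice points in this sub-cone at prescribed Thurston norm. By Fried, the entropy function extends continuously to $\R_{>0} \cdot F$ and is homogeneous of degree $-1$; since $\|\cdot\|_T$ is linear on $F$, the normalized entropy descends to a continuous function on $F$, and for any $L \geq \entropy$ the open sublevel set $\{\alpha \in F : \|\alpha\|_T \cdot h(f_\alpha) \leq L(k+1)\}$ contains $U$. The slice $\R_{>0} \cdot U \cap \{\|\alpha\|_T = 2g-2\}$ is a $k$-dimensional region of Euclidean volume $\asymp g^k$, so a standard Gauss-style lattice-point count (together with the positive density of primitive vectors) produces at least $c' g^k - c''$ primitive integer classes, each yielding an $f_\alpha \in \Mod(S_g)$ with $\kappa(f_\alpha) = k$ and $h(f_\alpha) < L(k+1)/(2g-2)$.

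Finally, I would pass from primitive integer classes to $\Mod(S_g)$-conjugacy classes of monodromies. Two primitive classes $\alpha, \beta$ in $\R_{>0} \cdot F$ with $\|\alpha\|_T = \|\beta\|_T = 2g-2$ yield conjugate mapping classes in $\Mod(S_g)$ exactly when they lie in a common orbit of the symmetry group $\mathrm{Sym}(M) = \pi_0(\mathrm{Homeo}(M))$ acting on $H^1(M;\Z)$; by Mostow rigidity this group embeds into the finite group $\mathrm{Isom}(M)$, so its order is independent of $g$. Dividing by this constant preserves the $\asymp g^k$ growth, producing the desired $c_1 g^k - c_2$.

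I expect the main obstacle to be item (ii) of the first step: exhibiting a single $3$-manifold with a $(k+1)$-dimensional fibered face on which the normalized entropy remains below $\entropy \cdot (k+1)$ throughout a full-dimensional sub-cone. The specific threshold $\entropy$ (half of the $\twoentropy$ appearing in Theorem~\ref{T:homology_bound}) is what pins the argument to the construction used to prove the upper bound there---one needs a sub-cone rather than a single ray in order to capture polynomially many primitive classes. A secondary subtlety, already handled in \cite{LM} for the unrestricted-$\kappa$ case, is verifying that distinct $\mathrm{Sym}(M)$-orbits of fibered classes do yield distinct $\Mod(S_g)$-conjugacy classes of monodromies, with due care for orientation conventions on the fibration.
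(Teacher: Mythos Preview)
Your plan is essentially the paper's own proof: fix the mapping torus $M$ of the generating example $f_{g_0}$ from Section~\ref{S:examples}, take a compact neighborhood $K$ of $[\Sigma]/\|[\Sigma]\|$ in the fibered face on which $\bar\h$ stays below $\entropy\,(k+1)$, count primitive integral points in $(2g-2)\cdot K$ via a Gauss-type argument, and divide by $|\mathrm{Isom}(M)|$ using Mostow rigidity.

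Two small corrections. First, your anticipated ``main obstacle'' (ii) is not an obstacle at all: since $\bar\h([\Sigma]) = (2g_0-2)\,h(f_{g_0}) < (2g_0-2)\cdot\entropy$ is \emph{strictly} less than $\entropy\,(k+1)$, continuity of $\bar\h$ immediately produces a full-dimensional neighborhood $K$ on which the bound persists. No further work is needed there. Second, for $k$ odd the upper-bound construction in the proof of Proposition~\ref{P:homology_bound_gen 2} takes $f_{g,k}=f_{g,k+1}$, whose mapping torus has $b_1 = k+2$, not $k+1$; the monodromies one extracts from it have $\kappa = k+1$ rather than $\kappa = k$. The paper handles this by deleting one nonseparating Dehn twist from $f_{g_0}$ (with $g_0=(k+1)/2$) to drop $\kappa$ by exactly one, and uses separate ad hoc examples in $\Mod(S_2)$ for $k\in\{1,3\}$. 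You should flag that the parity of $k$ forces this extra step.
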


Conjugacy classes of pseudo-Anosov homeomorphisms in $\Mod(S_g)$ are in bijection with geodesics in the moduli space of Riemann surfaces of genus $g$ and the entropy of a pseudo-Anosov mapping class is equal to the length of the corresponding geodesic in the Teichm\"uller metric on moduli space.  There is a canonical surface bundle on moduli space whose monodromy is given by the correspondence between loops in moduli space and elements of $\Mod(S_g)$.    Theorem~\ref{T:homology_count} can thus be viewed as an estimate on the number of short geodesics in moduli space where the associated monodromy fixes a subspace of dimension $k$.  

\p{Acknowledgments} We would like to thank Jeffrey Brock, Ken Bromberg, Sadayoshi Kojima, and Greg McShane for helpful conversations.
 
\section{Lower bounds}

The following proposition is the main goal of this section.  It implies the lower bound in Theorem~\ref{T:homology_bound} and in fact generalizes it to the case of an arbitrary surface of finite type (that is, a surface obtained from a compact surface by deleting finitely many points from the interior).

\begin{proposition} \label{P:homology_bound_gen}
Let $S$ be a surface of finite type and $\F$ a field.  If $f \in \Mod(S)$ is pseudo-Anosov, then 
\[ .00031 \left(\frac{\kappa_\F(f) + 1}{|\chi(S)|}\right ) \leq h(f).\]
\end{proposition}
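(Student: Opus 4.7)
The plan is to interpose the mapping torus $M_f$ between the entropy and the homology data. The Wang sequence for the fibration $S \hookrightarrow M_f \to S^1$ gives $b_1^\F(M_f) = \kappa_\F(f) + 1$ (the kernel and cokernel of $f_*-1$ on $H_1(S;\F)$ have equal dimension, and the fiber class contributes the extra $+1$), so it suffices to find a universal constant $C$ with
\[
 b_1^\F(M_f) \;\leq\; C \cdot |\chi(S)| \cdot h(f).
\]
Since $f$ is pseudo-Anosov, Thurston's hyperbolization theorem equips $M_f$ (treating any punctures of $S$ as cusps) with a complete finite-volume hyperbolic metric, and hyperbolic volume will serve as the bridge between the two halves of the argument.

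For the first half, I would control $\vol(M_f)$ from above by the entropy via the Kojima--McShane inequality, which produces an explicit universal constant $C_1$ such that
\[
 \vol(M_f) \;\leq\; C_1 \, |\chi(S)|\, h(f).
\]
For the second half, I need to control $b_1^\F(M_f)$ from above by $\vol(M_f)$. Over any field $\F$ one has
\[
 b_1^\F(M_f) \;\leq\; d(H_1(M_f;\Z)) \;\leq\; d(\pi_1(M_f)),
\]
where $d$ denotes the minimal number of generators, so it is enough to bound $d(\pi_1(M_f))$ linearly by volume. A Margulis-lemma decomposition of $M_f$ into thick and thin parts accomplishes exactly this: the thick part admits a cover by balls of radius the Margulis constant whose number is controlled by volume, each contributing a bounded number of generators, while each Margulis tube and each cusp end contributes $O(1)$ additional generators. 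Summing these contributions yields $d(\pi_1(M_f)) \leq C_2 \vol(M_f)$ with $C_2$ depending only on the three-dimensional Margulis constant and the minimum volume of a hyperbolic $3$-manifold.

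Concatenating the two halves gives $b_1^\F(M_f) \leq C_1 C_2 |\chi(S)|\, h(f)$, which after rearranging is exactly the claimed inequality with $C = 1/(C_1 C_2)$. The principal obstacle, and the place where the explicit numerical constant $.00031$ gets pinned down, is the careful bookkeeping in the second half: one must verify the Margulis-type generator bound uniformly in the characteristic of $\F$ (so that mod-$p$ torsion does not destroy the linear bound), handle the cusped case with care so that the bound survives when $S$ has punctures, and optimize the resulting product $C_1 C_2$ so that it yields the stated constant. Once these technical hurdles are cleared, the argument works verbatim for any surface of finite type, as asserted.
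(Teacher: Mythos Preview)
Your proposal is correct and follows essentially the same route as the paper: the paper also factors through the hyperbolic volume of $M_f$, invoking Kojima--McShane for $\vol(M_f) \leq 3\pi\,|\chi(S)|\,h(f)$ and a Margulis/thick--thin argument (sharpened via Agol--Culler--Shalen to use $\log(3)$ as the Margulis constant once $b_1 \geq 5$, together with a Voronoi-cell count) for $b_1(M_f;\F) \leq 334.08\cdot\vol(M_f)$. The product $3\pi \cdot 334.08$ is what produces the stated constant $.00031$.
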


The proof requires a few preliminary facts.  For a hyperbolic 3-manifold $M$ with finite volume, we denote by $\vol(M)$ the hyperbolic volume.  Also, for any field $\F$ we denote by $b_1(M;\F)$ the first Betti number of $M$ with coefficients in $\F$.

\begin{proposition} \label{P:volume_b1}
Let $\F$ be any field.  If $M$ is a complete, orientable, hyperbolic $3$-manifold of finite volume, then 
\[ b_1(M;\F) \leq 334.08 \cdot \vol(M). \] 
\end{proposition}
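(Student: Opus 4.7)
The plan is to realize $M$ up to homotopy by a simplicial complex whose $1$-skeleton has size $O(\vol(M))$; then for any field $\F$ the first Betti number of $M$ is bounded above by the number of $1$-simplices, giving the asserted inequality uniformly in the coefficient field.

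To build such a complex I would first apply the Margulis lemma to decompose $M = M_{\geq \epsilon_0} \cup M_{<\epsilon_0}$, where $\epsilon_0$ is an explicit Margulis constant for hyperbolic $3$-manifolds (e.g.\ Meyerhoff's value). The thin part is a disjoint union of cusp neighborhoods and Margulis tubes around short geodesics, each of bounded topological type, and the total number of components is $O(\vol(M))$. Fix $r < \epsilon_0/2$ and take a maximal $r$-separated net $\{x_1,\dots,x_N\}$ in $M_{\geq \epsilon_0}$. The disjointly embedded half-balls $B(x_i, r/2)$ force
\[ N \leq \frac{\vol(M)}{\vol_{\mathbb{H}^3}(B(r/2))}, \]
and by maximality the balls $B(x_i,r)$ cover $M_{\geq \epsilon_0}$. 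Augment this collection by a bounded-complexity good cover of each thin component: truncate each cusp at a definite horospherical depth and put a fixed cell structure on the cross-section torus; replace each Margulis tube by a neighborhood of its core circle.

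Since $r < \epsilon_0/2$, any nonempty intersection among the balls $B(x_i,r)$ lifts isometrically to an intersection of geodesic balls in $\mathbb{H}^3$, which is convex and hence contractible. Thus the combined cover of $M$ is a good cover, and the Nerve Theorem produces a simplicial complex $\mathcal{N}$ homotopy equivalent to $M$. A packing argument shows that if $x_i$ and $x_j$ span an edge then $d(x_i,x_j) < 2r$, and the disjoint balls $B(x_j,r/2)$ around such neighbors all lie in $B(x_i, 5r/2)$, so each vertex of $\mathcal{N}$ has valence at most an explicit constant $D = D(r)$. Hence
\[ b_1(M;\F) \leq \#\{\text{edges of } \mathcal{N}\} \leq \tfrac{1}{2} D(r) \cdot N + O(\vol(M)), \]
which is linear in $\vol(M)$. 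Using the closed formula $\vol_{\mathbb{H}^3}(B(s)) = \pi(\sinh(2s) - 2s)$ together with Meyerhoff's explicit $\epsilon_0$, one optimizes $r$ to realize the constant $334.08$.

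The main obstacle will be the treatment of the thin part, specifically the cusps: one must truncate each cusp at a definite horospherical depth, choose a fixed good cover of the cross-section torus, and verify that the resulting combined cover of $M$ is still good, so that the Nerve Theorem actually applies and the cusp contributions to both $b_1$ and the edge count are absorbed into the $O(\vol(M))$ term. A secondary technical point is managing the trade-off between $r$ (which controls $N$) and the valence bound $D(r)$ sharply enough to produce the stated constant $334.08$ rather than a weaker one.
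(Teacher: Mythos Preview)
Your outline is the standard Gelander-type argument, and it would indeed produce a linear bound $b_1(M;\F)\le C\cdot\vol(M)$ for \emph{some} explicit $C$. But it cannot produce the constant $334.08$, and the reason is not a matter of ``optimizing $r$'': the bottleneck is the Margulis constant itself. Meyerhoff's $\epsilon_0\approx 0.104$ forces you to pack with balls of radius at most $\epsilon_0/4\approx 0.026$, whose hyperbolic volume is on the order of $10^{-5}$; this alone makes $N$ larger by a factor of roughly $10^4$ compared with what the paper achieves, and the resulting edge bound is in the hundreds of thousands times $\vol(M)$, not $334.08$.

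The key idea you are missing is that the paper does not use a universal Margulis constant. It first reduces to the case $b_1(M;\F_p)\ge 5$ (harmless, since $\vol(M)\ge 0.94$ always) and then invokes the Agol--Culler--Shalen theorem (together with its extension to the noncompact case) to conclude that $\epsilon=\log 3\approx 1.099$ is a Margulis number for $M$. This is more than ten times Meyerhoff's constant, and since ball volumes at this scale are of order $1$ rather than $10^{-5}$, the packing and valence estimates become sharp enough to yield $334.08$.

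There is also a secondary efficiency loss in your approach that the paper explicitly flags: building a good cover so that the Nerve Theorem applies forces you to use balls of radius less than $\epsilon/2$ (so that overlaps lift to convex sets in $\mathbb H^3$), hence an $\epsilon/2$-net. The paper instead takes an $\epsilon$-net, forms the Voronoi decomposition, and works with the subgraph $\Gamma_0$ of the Delaunay graph spanned by edges of length $\le 2\epsilon$. One then only needs that $\pi_1(\Gamma_0)\to\pi_1(M_{\ge\epsilon})$ is surjective---proved directly by pushing loops across $2$-cells---rather than a full homotopy equivalence. This both simplifies the thin-part bookkeeping (no separate good cover of cusps or tubes is needed; one just uses that $\pi_1(M_{\ge\epsilon})\to\pi_1(M)$ is onto) and doubles the packing radius, further tightening the constant.
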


Gelander proved  \cite[Corollary 1.3]{Gelander} there exists a constant $D > 0$ so that $b_1(M;\F) \leq D \cdot \vol(M)$ (see also \cite{BGLS}).  Gelander's result applies in much greater generality, but the proof there does not provide an explicit constant. 

For a Riemannian manifold, the $\epsilon$-thick part $M_{\geq \epsilon}$ is the subset of $M$ with injectivity radius greater than or equal to $\epsilon/2$.  An $\epsilon > 0$ is a \emph{Margulis constant} for a hyperbolic manifold $M$ if
\begin{itemize}
\item $M_{\geq \epsilon}$ is the complement of a union of open solid tori and product neighborhoods of the ends of $M$, and
\item for every $x \in M_{\geq \epsilon}$, the open ball $B_{\epsilon/2}(x)$ in $M$ is path isometric to an open ball of radius $\epsilon/2$ in hyperbolic $3$--space.
\end{itemize}

To prove Proposition~\ref{P:volume_b1}, we will need a theorem of the first author with Culler and Shalen \cite[Corollary 4.2 and Lemma 5.2]{ACS}.

\begin{theorem}[Agol--Culler--Shalen]
\label{thm:acs}
Let $M$ be a complete hyperbolic manifold of finite volume.
\begin{enumerate}
\item If each 2-generator subgroup of $\pi_1(M)$ is either free or free abelian, then $\log(3)$ is a Margulis number for $M$.
\item If $M$ satisfies either of the conditions
\begin{itemize}
 \item $b_1(M;\Q) \geq 3$, or
 \item $M$ is closed and $b_1(M;\F_p) \geq 4$ for some prime $p$,
\end{itemize}
then each 2-generator subgroup of $\pi_1(M)$ is either free or free abelian; in particular, $\log(3)$ is Margulis constant for $M$.
\end{enumerate}
\end{theorem}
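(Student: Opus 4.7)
The two parts are essentially independent: Part (1) is a Margulis-lemma-style consequence of the Culler--Shalen $\log 3$ inequality, while Part (2) is a character-variety argument. For Part (1), the key input is the inequality that if $\gamma_1, \gamma_2$ generate a rank-$2$ free subgroup of $\mathrm{Isom}^+(\mathbb{H}^3)$, then for every $x \in \mathbb{H}^3$,
\[ \frac{1}{1 + e^{d(x, \gamma_1 x)}} + \frac{1}{1 + e^{d(x, \gamma_2 x)}} \leq \frac{1}{2}. \]
If $x \in M$ has injectivity radius less than $\log(3)/2$, I would lift to $\widetilde{x} \in \mathbb{H}^3$ to obtain two noncommuting $\gamma_1, \gamma_2 \in \pi_1(M)$ with displacements strictly less than $\log 3$; each summand above then exceeds $1/4$, and the inequality fails. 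By hypothesis, $\langle \gamma_1, \gamma_2 \rangle$ must therefore be free abelian, so $\gamma_1$ and $\gamma_2$ share either a common axis or a parabolic fixed point. Standard Margulis-lemma bookkeeping then shows that the $\log 3$-thin part of $M$ is a disjoint union of Margulis tubes and cusp neighborhoods, and that balls of radius $\log(3)/2$ about thick points inject isometrically.

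For Part (2), I would argue by contrapositive: suppose some two-generator subgroup $H \leq \pi_1(M)$ is neither free nor free abelian. Then $H$ admits a nontrivial relation, and the $\PSL_2(\mathbb{C})$-character variety of $H$ has positive-dimensional components. Culler--Shalen theory converts the ideal points of such a component into a nontrivial splitting of $H$ and, pushed forward along the inclusion into $\pi_1(M)$, into an essential (possibly singular) surface in $M$. Thurston norm estimates, combined with Poincar\'e duality and, in the closed $\mathbb{F}_p$ case, $p$-torsion refinements of the Alexander-module picture, then force $b_1(M; \mathbb{Q}) \leq 2$ and, in the closed case, $b_1(M; \mathbb{F}_p) \leq 3$ for every prime $p$. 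This gives the first assertion of Part (2), and combining it with Part (1) yields that $\log 3$ is then a Margulis constant for $M$.

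The principal obstacle is Part (2). Extracting a splitting from a non-free, non-abelian two-generator subgroup is standard Culler--Shalen; the real work is turning such a splitting into sharp numerical bounds on $b_1$, which requires careful Thurston norm estimates and an honest treatment of ideal points in the character variety. This is where the bulk of the proof in \cite{ACS} lies, and I would rely on their analysis rather than attempt to reconstruct it here.
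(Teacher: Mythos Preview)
The paper does not supply its own proof of this theorem; it is quoted from \cite{ACS} (specifically Corollary~4.2 and Lemma~5.2 there), so there is no in-paper argument to compare against. That said, your sketch deserves comment on its own merits.

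Your treatment of Part~(1) is essentially correct: the Culler--Shalen $\log 3$ inequality is exactly the input, and the deduction that any two elements of small displacement at a point must commute (else they generate a free group and violate the inequality) is the right mechanism. Your phrasing is slightly off---small injectivity radius at $x$ gives one short element, not automatically two noncommuting ones---but the intended argument (any pair of short elements at $x$ commute, hence the thin part is a union of tubes and cusps) is sound.

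Part~(2), however, is misdescribed. The argument in \cite{ACS} is not a character-variety/ideal-point construction producing essential surfaces and Thurston-norm bounds. Rather, it is a covering-space argument powered by the tameness theorem (Agol, Calegari--Gabai). The homological hypotheses guarantee that any $2$-generator subgroup $H$ has infinite index in $\pi_1(M)$: over $\Q$, the image of $H$ in $H_1(M;\Q)$ has rank at most $2$, so if $b_1\geq 3$ there is a surjection $\pi_1(M)\to\Z$ killing $H$; the closed $\F_p$ case with $b_1\geq 4$ requires an extra step passing to a finite cover. Once $H$ has infinite index, tameness implies the corresponding cover is the interior of a compact irreducible $3$-manifold with nonempty boundary and $2$-generator fundamental group; such a manifold is a handlebody or $T^2\times I$, so $H$ is free or free abelian. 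Your proposed route---extracting a splitting of $H$ from ideal points of its character variety and pushing it into $M$ to bound $b_1$---does not match this, and in particular the claim that a non-free $2$-generator group has a positive-dimensional $\PSL_2(\C)$ character variety is not generally true and not what is used.
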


We will also need to extend this theorem to one additional case.

\begin{proposition}
\label{P:ACSlog3}
Let $p$ be any prime.  If $M$ is a complete, noncompact hyperbolic manifold of finite volume with $b_1(M;\F_p) \geq 5$, then $log(3)$ is a Margulis number for $M$.
\end{proposition}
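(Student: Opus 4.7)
The plan is to invoke Theorem~\ref{thm:acs}(1) by showing that every 2-generator subgroup $\Gamma = \langle a,b\rangle \subset \pi_1(M)$ is free or free abelian. If $b_1(M;\Q) \geq 3$ this is immediate from Theorem~\ref{thm:acs}(2), so I will assume $b_1(M;\Q) \leq 2$. The half-lives-half-dies theorem (Poincar\'e--Lefschetz duality) applied to the compact core of $M$ makes the image of $H_1(\partial M;\Q) \to H_1(M;\Q)$ of dimension equal to the number $n$ of cusps of $M$, forcing $n \leq b_1(M;\Q) \leq 2$. Fix such a $\Gamma$; I may additionally assume $\Gamma$ is not conjugate into any cusp subgroup of $\pi_1(M)$, since otherwise $\Gamma \subset \Z^2$ is already free abelian.

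The strategy is to Dehn fill $M$ to obtain a closed hyperbolic manifold $N$ into which $\Gamma$ injects and with $b_1(N;\F_p) \geq 4$, and then apply Theorem~\ref{thm:acs}(2) to $N$. By Thurston's hyperbolic Dehn surgery theorem, for all but finitely many choices of primitive slopes $\alpha_i$ on the cusp tori $T_i$ the filling $N = M(\alpha_1,\ldots,\alpha_n)$ is closed hyperbolic. For sufficiently long slopes, a Dehn filling theorem for relatively hyperbolic groups (e.g.\ Thurston, Groves--Manning) ensures the surjection $\pi_1(M) \twoheadrightarrow \pi_1(N)$ restricts to an injection on $\Gamma$, using the hypothesis that $\Gamma$ is not conjugate into a cusp subgroup.

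To additionally arrange $b_1(N;\F_p) \geq 4$, observe that
\[
H_1(N;\F_p) = H_1(M;\F_p)\,\big/\,\mathrm{span}\{i_*(\alpha_1),\ldots,i_*(\alpha_n)\},
\]
so the span $W$ must have $\F_p$-dimension at most $b_1(M;\F_p) - 4 \leq 1$. Set $V_i := i_*(H_1(T_i;\F_p)) \subset H_1(M;\F_p)$; half-lives-half-dies over $\F_p$ gives $\dim_{\F_p}(V_1 + \cdots + V_n) = n \leq 2$, and a short case analysis on $(\dim V_1,\dim V_2)$ shows that one can always select nonzero classes $\bar\alpha_i \in H_1(T_i;\F_p)$ whose images in $H_1(M;\F_p)$ all lie on a single line. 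Each such $\bar\alpha_i$ has infinitely many primitive $\Z$-lifts of arbitrarily large length on $T_i$, so the $\F_p$-congruence condition is compatible both with Thurston's finite exceptional-slope restriction and with the depth required for injectivity of $\Gamma \hookrightarrow \pi_1(N)$. With such an $N$ in hand, Theorem~\ref{thm:acs}(2) gives that every 2-generator subgroup of $\pi_1(N)$, in particular the image of $\Gamma$, is free or free abelian, and injectivity transports this back to $\Gamma$ itself.

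The hardest step will be the subcase $n = 2$ with $\dim V_1 = \dim V_2 = 2$: the constraint $\dim(V_1+V_2) = 2$ forces $V_1 = V_2$, and one must pick $\bar\alpha_1,\bar\alpha_2$ both mapping to a single nonzero $v \in V_1 = V_2$, which is possible because each map $H_1(T_i;\F_p) \to V_i$ is then an isomorphism. In every other configuration a nonzero element of $\ker(i_*: H_1(T_i;\F_p) \to H_1(M;\F_p))$ is available in at least one cusp, which makes the linear algebra transparent.
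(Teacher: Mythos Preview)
Your approach is genuinely different from the paper's, and the linear-algebra portion (choosing slopes modulo $p$ so that $b_1(N;\F_p)\ge 4$) is fine. The gap is in the Dehn-filling step. The Groves--Manning/Osin theorem guarantees only that any prescribed \emph{finite} subset of $\pi_1(M)\setminus\{1\}$ survives in sufficiently long fillings; it does not assert that an infinite subgroup $\Gamma$ injects, and the hypothesis ``$\Gamma$ not conjugate into a cusp subgroup'' is much weaker than what would be needed. Indeed, geometrically infinite subgroups generally fail to inject under any Dehn filling (think of a fiber surface group: its peripheral curves are killed). To repair the argument you would have to split into cases: if $\Gamma$ is geometrically infinite, then by tameness (Agol, Calegari--Gabai) and Canary's covering theorem it is a virtual fiber group, and since $M$ is cusped the fiber is punctured, so $\Gamma$ is already free; if $\Gamma$ is geometrically finite, you need a survival theorem for relatively quasiconvex subgroups under long fillings, and even then must handle parabolics of $\Gamma$ separately. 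This is substantially more machinery than you invoke.

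The paper's proof avoids all of this by going up rather than down: instead of Dehn filling, it passes to the finite abelian cover $\widetilde M\to M$ corresponding to a surjection $\pi_1(M)\to\F_p^{\,r}$ (with $r\ge 3-n$, where $n\le 2$ is the number of cusps) whose kernel contains both $\Gamma$ and every cusp subgroup. Then $\Gamma$ lifts to $\pi_1(\widetilde M)$ tautologically, each cusp lifts to $p^r$ cusps, so $\widetilde M$ has at least $np^{\,3-n}\ge 4$ cusps, and half-lives-half-dies gives $b_1(\widetilde M;\Q)\ge 4\ge 3$; now Theorem~\ref{thm:acs}(2) applies directly to $\widetilde M$. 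This uses only elementary covering-space theory together with the same homological bookkeeping you already carried out.
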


\begin{proof}

By the structure of the thick-thin decomposition of $M$ \cite[Corollary 5.2.10]{Th}, $M$ is homeomorphic to the interior of a compact manifold $M'$ with boundary $\partial M'$ homeomorphic to a union of $d \geq 1$ tori.  From Poincar\'e--Lefschetz duality and the long-exact sequence of the homology of the pair $(M',\partial M')$ it follows that the image of $H_1(\partial M';\F) \to H_1(M';\F) \cong H_1(M;\F)$ has dimension exactly $d$ for any field $\F$.   Thus, if $d \geq 3$ then $b_1(M;\Q) \geq 3$, and we may appeal to Theorem~\ref{thm:acs}(2).

It remains to consider the cases of $d \in \{1,2\}$.  Let $G$ be any 2-generator subgroup of $\pi_1(M)$.  The image of $H_1(G;\F_p)$ in $H_1(M;\F_p)$ is a subspace of dimension at most $2$, which together with the image of $H_1(\partial M';\F_p)$ spans a subspace of dimension at most $2+d$ in $H_1(M;\F_p)$.  Therefore, we can find a homomorphism from $\pi_1(M)$ onto $\F_p^r$, where $r \geq b_1(M;\F_p)-(2+d) \geq 3-d$ whose kernel contains both $G$ and $\pi_1(\partial M'_0)$ for each component $\partial M'_0 \subset \partial M'$.  Then the cover $\widetilde M' \to M'$ corresponding to the kernel has $dp^r \geq kp^{3-d} \geq 4$ boundary components and has fundamental group containing an isomorphic copy of $G$.  Appealing again to Theorem~\ref{thm:acs} completes the proof.
\end{proof}

\begin{proof}[Proof of Proposition \ref{P:volume_b1}]

Let $\F$ be any field.  To prove the proposition, we may assume that $b_1(M;\F) \geq 5$; indeed, the volume of any orientable hyperbolic 3-manifold is at least that of the Weeks manifold, which is approximately $.94$ \cite{GMM} and so $334.08 \cdot \vol(M)$ is always (much) greater than five.  When $b_1(M;\F) \geq 5$, then $b_1(M;\F_p) \geq 5$ for some prime $p$ (indeed, a matrix representing the action of $f$ on  $H_1(S_g;\F)$ has coefficients in the prime subfield of $\F$, and thus $\kappa_\F(f)$ depends only on the characteristic of $\F$).  Thus by Proposition \ref{P:ACSlog3}, $\epsilon = \log(3)$ is a Margulis number for $M$.

Since $\pi_1(M_{\geq \epsilon}) \to \pi_1(M)$ is surjective it suffices to show that $b_1(M_{\geq \epsilon};\F) \leq 334.08 \cdot \vol(M)$.  To this end, let $V = \{v_1, \dots, v_m \}$ be a maximal collection of points in $M_{\geq \epsilon}$ so that the distance in $M$ between any two points of $V$ is at least $\epsilon$ (that is, an $\epsilon$-net).   By maximality, $M_{\geq \epsilon}$ is contained in the $\epsilon$-neighborhood of $V$ in $M$.  Let $X$ be the Voronoi cell decomposition of $M$ defined by $V$ and $\Gamma$ the dual graph (i.e. the Delaunay graph).  Let $\Gamma_0 \subset \Gamma$ be the subgraph consisting of all edges of length at most $2 \epsilon$.  To complete the proof, we will show two things:
\begin{enumerate}
 \item the map $\pi_1(\Gamma_0,v_0) \to \pi_1(M_{\geq \epsilon},v_0)$ is surjective, and 
 \item $b_1(\Gamma_0;\F) \leq 334.08 \cdot \vol(M)$.
\end{enumerate}
To prove the first statement, let $\gamma$ be any loop in $M_{\geq \epsilon}$ based at $v_0$.  We must find a loop in $\Gamma_0$ that is homotopic to $\gamma$ as a based loop.  Modifying $\gamma$ by homotopy in $M_{\geq \epsilon}$, we can assume $\gamma$ is transverse to the 2-skeleton of $X$; in particular, the intersection of $\gamma$ with any 2-cell lies in the interior of the 2-cell and there are finitely many such intersections.   Since $\gamma$ is contained in $M_{\geq \epsilon}$, it is always within distance $\epsilon$ of some $v_i$.  Therefore, any $2$--cell of $X$ that $\gamma$ crosses is dual to an edge with length at most $2\epsilon$.  It follows that we can homotope $\gamma$ (rel $v_0$) to the loop $\gamma' \subset \Gamma_0$ that traverses the edges determined by the $2$-cells $\gamma$ meets.

Now we bound $b_1(\Gamma_0;\F)$.  The number of vertices of $\Gamma_0$ is $m$. By definition of the $\{v_i\}$ and the fact that $\epsilon$ is a Margulis number for $M$ the $\epsilon/2$-balls $\{ B_{\epsilon/2}(v_j)\}_{j=1}^m$ are embedded and pairwise disjoint.  We thus have
\[ m \vol\left(B_{\epsilon/2}\right)  \leq \vol(M).\]
And using the formula $\vol(B_r) = \pi(\sinh 2r-2r)$ we calculate
\[ m < \vol(M)/0.234721.\]

The valence of any vertex of $\Gamma_0$ is bounded by the maximum number of points in a ball of radius $2 \epsilon$ with distance to the center at least $\epsilon$ and pairwise distance at least $\epsilon$.  This is the same as the maximum number of pairwise disjoint balls of radius $\epsilon/2$ that we can fit in the shell outside a ball of radius $\epsilon/2$ and inside a ball of radius $5 \epsilon/2$ with the same center.
This is bounded by the volume ratio
\[ \mathcal V = \frac{\vol(B_{5 \epsilon/2}) - \vol(B_{\epsilon/2})}{\vol(B_{\epsilon/2})} < 493.2244575.\] 
The valence of $\Gamma_0$ is thus at most $\mathcal V$, and so $\Gamma_0$ has at most $\frac{\mathcal V}2 m$ edges.

The first betti number of a connected graph is given by $1 - (\# \text{vertices}) + (\# \text{edges})$.   Thus by the previous two paragraphs, 
\[ b_1(\Gamma_0;\F) \leq 1+ \left(\frac{\mathcal V-2}{2}\right)m < 1 + \frac{491.2244575}{2\pi (0.234721)}\vol(M) < 1 + 333.08\vol(M). \]
Using the fact that a complete, hyperbolic manifold $M$ with $b_1(M;\F) \geq 5$ has volume at least 1 (see \cite{ACS,CM,PrzB1}), the proposition follows.
\end{proof}

\p{Remark} In the previous proof, we used a Voronoi decomposition instead of a good open cover because the latter would require an $\epsilon/2$-net instead of an $\epsilon$-net; this would result in a larger constant.

\bigskip

Proposition~\ref{P:homology_bound_gen} will follow easily from Proposition~\ref{P:volume_b1} and the following recent theorem of Kojima--McShane \cite{KM}; see also Brock-Bromberg \cite{BBWP}.   Kojima and McShane's theorem refines an earlier result of Brock \cite{BrockWP}, and builds on work of Schlenker \cite{S}.  To state it, we require a theorem of Thurston \cite{Tharxiv} which states that if $S$ is a hyperbolic surface of finite type and $f \in \Mod(S)$ is pseudo-Anosov, then the mapping torus $M = M_f$ admits a complete hyperbolic metric of finite volume; in particular, $\vol(M_f)$ makes sense and is finite.

\begin{theorem}  [Kojima--McShane] \label{T:Koj-McS}
Let $S$ be a surface of finite type.  If $f \in \Mod(S)$ is pseudo-Anosov, then 
\[ \vol(M_f) \leq 3 \pi \,|\chi(S)|\, h(f).\]
\end{theorem}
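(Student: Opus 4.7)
The plan is to realize $M_f$ as the quotient of a geometric limit of quasi-Fuchsian manifolds along a Teichm\"uller axis, apply Schlenker's sharp renormalized-volume inequality, and pass to the limit. This is the strategy underlying the chain of results Brock $\to$ Brock--Bromberg $\to$ Schlenker $\to$ Kojima--McShane.

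First, fix a basepoint $X_0 \in \T(S)$ on the Teichm\"uller geodesic axis of $f$, so that $d_{\T}(f^i(X_0), f^j(X_0)) = |i-j|\, h(f)$ for all $i,j \in \Z$. For each $n \geq 1$, form the quasi-Fuchsian $3$-manifold
\[ Q_n \;=\; Q\!\left(f^{-n}(X_0),\, \overline{f^n(X_0)}\right). \]
By Thurston's double limit theorem combined with Minsky's ending lamination theorem, the $Q_n$ converge geometrically to the infinite cyclic cover $\widetilde M_f \cong S \times \R$ of $M_f$. Concretely, the convex core of $Q_n$ is, away from its two ends, close to a fundamental domain for the degree $2n$ cyclic cover of $M_f$, and the ends contribute at most a bounded additive error. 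Hence
\[ \lim_{n \to \infty} \frac{\vol(\mathrm{core}(Q_n))}{2n} \;=\; \vol(M_f). \]

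Second, apply Schlenker's sharp inequality, which upgrades Brock's qualitative $\vol(M_f) \asymp L_{WP}(f)$ \cite{BrockWP} to an explicit coefficient \cite{S, BBWP, KM}: for every pair $X, Y \in \T(S)$,
\[ \vol(\mathrm{core}(Q(X,Y))) \;\leq\; 3\pi\,|\chi(S)|\, d_{\T}(X,Y) \;+\; O\!\left(|\chi(S)|\right). \]
Applied to $Q_n$ with $d_{\T}(f^{-n}(X_0), f^n(X_0)) = 2n\, h(f)$, this gives
\[ \vol(\mathrm{core}(Q_n)) \;\leq\; 6\pi\, n\, |\chi(S)|\, h(f) \;+\; O(|\chi(S)|). \]
Dividing by $2n$ and letting $n \to \infty$ kills the additive error and yields $\vol(M_f) \leq 3\pi\, |\chi(S)|\, h(f)$, as desired.

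The principal obstacles are establishing the explicit coefficient $3\pi$ in Schlenker's inequality---this requires the full calculus of variations of the renormalized volume developed by Krasnov--Schlenker, together with a careful comparison of the Teichm\"uller and Weil--Petersson metrics---and justifying the convergence of convex-core volumes per sheet of the cyclic cover, which relies on Minsky's bilipschitz model of the ending geometry of $\widetilde M_f$ and an argument that no new parabolics appear in the geometric limit. Both inputs are the genuine analytic content and would be imported as black boxes from \cite{S, BBWP, KM}; the overall plan is then essentially structural, reducing the mapping-torus bound to a statement about quasi-Fuchsian manifolds along a Teichm\"uller geodesic.
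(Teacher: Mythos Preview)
The paper does not prove this theorem; it is quoted as a black-box input from Kojima--McShane \cite{KM} (with pointers to \cite{BBWP}, \cite{BrockWP}, \cite{S}) and then immediately applied in the proof of Proposition~\ref{P:homology_bound_gen}. There is therefore no proof in the paper to compare your sketch against.

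Your outline is a reasonable high-level summary of the argument in the cited sources, but one attribution is off. The displayed inequality you label ``Schlenker's sharp inequality,'' with Teichm\"uller distance and the coefficient $3\pi$, is not what Schlenker proves. Schlenker (and Krasnov--Schlenker) supply the variational formula for the renormalized volume and its comparison with convex-core volume; the step that bounds the derivative of renormalized volume along a Teichm\"uller geodesic by $3\pi\,|\chi(S)|$---via Nehari's bound on the Schwarzian of the uniformizing map---is precisely the contribution of Kojima and McShane. Your final paragraph gestures at this, but the earlier display credits the wrong paper. Structurally, the plan you describe (quasi-Fuchsian interpolation along the axis, passage to a geometric limit over the cyclic cover, and division by $2n$ to kill the additive error) is indeed the route taken in \cite{KM}.
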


\begin{proof}[Proof of Proposition~\ref{P:homology_bound_gen}]

It is an easy application of the Mayer--Vietoris long exact sequence that $b_1(M;\F) = \kappa_\F(f) + 1$ (decompose $S^1$ into the union of two intervals and pull this decomposition back to $M_f$).  Applying this fact, Proposition~\ref{P:volume_b1}, and Theorem~\ref{T:Koj-McS} in succession,  we have
\[ \kappa_\F(f)+1 = b_1(M;\F) \leq 334.08 \cdot \vol(M) \leq 334.08\cdot 3\pi\,|\chi(S)| \, h(f).\]
Since $1/(3 \pi \cdot  334.08) \approx .000317$, the proposition follows.
\end{proof}

As suggested to us by Peter Shalen, we can combine Theorem~\ref{T:Koj-McS} with the following result of Culler--Shalen to obtain an improvement on Penner's lower bound under a mild assumption on $f$.

\begin{theorem}[Culler--Shalen]
If $M$ is a hyperbolic 3-manifold with $b_1(M;\F_2) \geq 6$ then $\vol(M) \geq 3.08$.
\end{theorem}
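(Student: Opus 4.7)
The approach I would take follows the Culler--Shalen paradigm that couples the first Betti number over $\F_2$ to Margulis numbers and then to explicit hyperbolic volume lower bounds. The argument naturally splits into two phases.

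\emph{Phase 1 (upgrade homology to a Margulis constant).} Proposition~\ref{P:ACSlog3} already yields that $\log(3)$ is a Margulis number for $M$ when $b_1(M;\F_2) \geq 5$, so the present hypothesis gives this for free and in fact leaves one dimension of slack. I would try to exploit this extra dimension to either produce a strictly larger Margulis constant, or to locate linearly independent short loops in $M$ whose homology classes survive in $H_1(M;\F_2)$. The mechanism is the same as in the proof of Proposition~\ref{P:ACSlog3}: for any 2-generator subgroup $G \leq \pi_1(M)$ the image of $H_1(G;\F_2)$ in $H_1(M;\F_2)$ has dimension at most 2, and the image of $H_1(\partial M;\F_2)$ has dimension equal to the number $d$ of toroidal ends; with $b_1(M;\F_2) \geq 6$ one has enough room to pass to a mod-2 cover of $M$ with many boundary components in which $G$ lifts isomorphically and the Agol--Culler--Shalen log(3)-theorem applies with room to spare.

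\emph{Phase 2 (convert Margulis data into a volume bound).} With a Margulis number $\mu$ in hand, embedded hyperbolic balls of radius $\mu/2$ exist around every point of $M_{\geq \mu}$, each contributing volume $\pi(\sinh\mu - \mu)$. Taking a maximal $\mu$-separated net in the thick part (as in the proof of Proposition~\ref{P:volume_b1}) and simultaneously accounting for embedded Margulis tubes around short geodesics in the thin part gives a collection of pairwise disjoint embedded subregions of $M$ whose volumes can be computed explicitly. The hypothesis $b_1(M;\F_2) \geq 6$ is used to force either the net in the thick part to be sufficiently large or to produce sufficiently many short geodesics with independent mod-2 homology classes and hence embedded disjoint tubes, in either case yielding a usable multiplicative lower bound.

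\emph{Main obstacle.} The hardest step will be the quantitative optimization in Phase 2 that produces the precise numerical constant $3.08$ rather than some weaker bound. One must optimize simultaneously over tube radii (controlled by geodesic lengths via the Margulis lemma), the number of linearly independent homology classes realized in the thin part, and the density of the net in the thick part, and then combine these against a sharp hyperbolic ball-packing estimate. I would not expect a clean closed-form calculation; rather, the constant should emerge from balancing these contributions using the explicit packing and trace-field tools that have been developed in the Culler--Shalen line of work, in the spirit of the estimates used earlier in this section.
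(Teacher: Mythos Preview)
The paper does not prove this theorem at all: it is quoted as an external result of Culler and Shalen and used as a black box to derive Corollary~\ref{C:CSmod2}. There is therefore no in-paper proof to compare your proposal against.

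As for your outline on its own merits: the general architecture you describe---use mod-$2$ Betti number hypotheses to control $2$-generator subgroups and hence Margulis numbers, then convert a Margulis number into disjoint embedded balls and tubes to get a volume lower bound---is indeed the Culler--Shalen paradigm. But what you have written is not a proof; it is a plan with the key quantitative step left entirely open. You yourself flag this in your ``Main obstacle'' paragraph: producing the specific constant $3.08$ requires a delicate optimization over tube radii, short-geodesic configurations, and packing densities, and you give no indication of how to carry that out or why the answer should come out above $3.08$ rather than, say, $1.5$. Phase~1 also contains a vague step (``try to exploit this extra dimension to either produce a strictly larger Margulis constant, or to locate linearly independent short loops'') that is not resolved into a definite claim. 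So the proposal correctly identifies the ambient strategy but stops well short of an argument; since the paper offers no proof of its own, the honest assessment is simply that this theorem is being cited, not proved, and your sketch does not fill that gap.
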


\begin{corollary} \label{C:CSmod2}
Let $S$ be any surface of finite type and let $f \in \Mod(S)$.  If $\kappa_{\F_2}(f) \geq 5$, then $h(f) \geq  .326/|\chi(S)|$.
\end{corollary}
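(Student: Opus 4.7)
The plan is to chain together the Culler--Shalen volume estimate with the Kojima--McShane inequality, using the Mayer--Vietoris identity $b_1(M_f;\F) = \kappa_\F(f)+1$ already noted in the proof of Proposition~\ref{P:homology_bound_gen}.

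First I would apply the identity $b_1(M_f;\F_2) = \kappa_{\F_2}(f) + 1$ to the hypothesis $\kappa_{\F_2}(f) \geq 5$, obtaining $b_1(M_f;\F_2) \geq 6$. Since $f$ is pseudo-Anosov, Thurston's hyperbolization gives that $M_f$ is a complete hyperbolic $3$-manifold of finite volume, so the Culler--Shalen theorem applies and yields $\vol(M_f) \geq 3.08$.

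Next I would invoke Theorem~\ref{T:Koj-McS} (Kojima--McShane), which bounds $\vol(M_f) \leq 3\pi\,|\chi(S)|\,h(f)$. Combining the two inequalities gives
\[ 3.08 \leq \vol(M_f) \leq 3\pi\,|\chi(S)|\,h(f), \]
so that
\[ h(f) \geq \frac{3.08}{3\pi \, |\chi(S)|}. \]
Since $3.08/(3\pi) \approx 0.3267 > .326$, the claimed inequality $h(f) \geq .326/|\chi(S)|$ follows.

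There is essentially no obstacle here: the corollary is a direct two-line consequence of assembling the three ingredients (Mayer--Vietoris, Culler--Shalen, Kojima--McShane). The only thing to be slightly careful about is verifying that the hypothesis $f \in \Mod(S)$ pseudo-Anosov is used (implicitly, via Thurston, to guarantee that $M_f$ is hyperbolic of finite volume so that both the Culler--Shalen and Kojima--McShane hypotheses are satisfied). Note that this statement should really say $f$ is pseudo-Anosov, since otherwise the entropy $h(f)$ need not correspond to a hyperbolic mapping torus and the inequality would be vacuous or ill-defined.
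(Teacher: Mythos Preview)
Your proof is correct and follows exactly the approach the paper intends: the paper does not write out a proof of this corollary, but the sentence preceding it says to combine Theorem~\ref{T:Koj-McS} with the Culler--Shalen volume bound, together with the Mayer--Vietoris identity $b_1(M_f;\F)=\kappa_\F(f)+1$ established in the proof of Proposition~\ref{P:homology_bound_gen}. Your observation that the statement should require $f$ to be pseudo-Anosov is also well taken.
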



\section{Upper bounds} \label{S:examples}

The next proposition gives the upper bound in Theorem~\ref{T:homology_bound}.  Combining this proposition with Proposition~\ref{P:homology_bound_gen} immediately gives Theorem~\ref{T:homology_bound}.

\begin{proposition} \label{P:homology_bound_gen 2}
Let $\F$ be a field, let $g \geq 2$, and let $0 \leq k \leq 2g$.  There exists a pseudo-Anosov $f_{g,k} \in \Mod(S_g)$ with $\kappa_\F(f_{g,k}) \geq k$ and 
\[h(f_{g,k})  \leq \twoentropy \left(\frac{k + 1}{2g-2}\right ) .\]
\end{proposition}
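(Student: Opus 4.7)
The plan is to construct $f_{g,k}$ as the monodromy of a fibration of a hyperbolic $3$-manifold, relying on Thurston norm theory together with the Fried--McMullen theorem that entropy varies continuously on a fibered face.

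For each $k$, I would first produce a complete hyperbolic $3$-manifold $N=N_k$ of finite volume fibering over the circle with $b_1(N;\F)\ge k+1$, together with a distinguished fibered face $\mathcal{F}$ of its Thurston norm ball, chosen so that for every primitive integral class $\alpha$ in the open fibered cone $\Cone(\mathcal{F})$, the associated monodromy $\phi_\alpha$ satisfies the uniform normalized-entropy bound $|\chi(F_\alpha)|\cdot h(\phi_\alpha)\le \twoentropy\,(k+1)$. A natural source for $N_k$ is a finite cover (of degree roughly $k+1$) of a small-volume fibered manifold such as the magic manifold or a Hironaka-style example; the cover is designed both to boost $b_1$ to at least $k+1$ and to keep the pulled-back face controlled. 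The uniform bound across $\Cone(\mathcal{F})$ is provided by Fried--McMullen: the reciprocal $1/h$ extends to a continuous, concave, degree-one homogeneous function on the open cone, and combined with Thurston's identity $\|\alpha\|_T=|\chi(F_\alpha)|$ this bounds $|\chi|\cdot h$ uniformly on integral lattice points.

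Given $(g,k)$, I would then select a primitive integral class $\alpha\in\Cone(\mathcal{F})$ with $\|\alpha\|_T=2g-2$ and $F_\alpha$ connected, so that $F_\alpha\cong S_g$. Setting $f_{g,k}=\phi_\alpha$, the mapping torus $M_{f_{g,k}}$ is homeomorphic to $N_k$, and Mayer--Vietoris (exactly as in the proof of Proposition~\ref{P:homology_bound_gen}) yields $\kappa_\F(f_{g,k})+1=b_1(N_k;\F)\ge k+1$, so $\kappa_\F(f_{g,k})\ge k$. The entropy bound follows directly from the normalized-entropy estimate on the fibered cone.

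The main obstacle is the concrete construction of the family $\{N_k\}$ and the verification of the explicit constant $\twoentropy$. A plausible route starts with Hironaka's low-entropy example (with $(2g_0-2)h(\phi_0)\le \entropy = 6\log(2)$) and passes to a $(k+1)$-sheeted cover: the extra factor of $2$ in $\twoentropy=12\log(2)$ compared to the counting-theorem bound $\entropy$ presumably comes from this covering step, which increases $b_1$ but also inflates the normalized entropy along the pulled-back face. A secondary difficulty is realizing every $(g,k)$ in the stated range: since $\|\alpha\|_T$ only takes integer values along an arithmetic progression in the fibered cone, for finitely many small $g$ (in particular $g$ close to $k/2$) one may need to vary the model manifold or appeal to a separate construction, such as the Farb--Leininger--Margalit examples that handle the $k=2g$ regime directly.
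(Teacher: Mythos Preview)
Your outline has a genuine error in the step you attribute to Fried--McMullen. The normalized entropy $\bar\h(\alpha)=\|\alpha\|_T\cdot h(\phi_\alpha)$ is \emph{not} uniformly bounded on a full open fibered cone: it is constant on rays, but since $\h$ blows up as one approaches $\partial\C$ while the Thurston norm stays bounded away from zero there, $\bar\h$ tends to infinity at the boundary. Concavity of $1/\h$ gives no such uniform bound. So the sentence ``The uniform bound across $\Cone(\mathcal{F})$ is provided by Fried--McMullen'' is false as stated, and without it your construction does not yield the entropy inequality.

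The paper avoids this in a way that is quite different from a covering argument. Rather than passing to covers to increase $b_1$, it builds for each even $k$ an explicit pseudo-Anosov $f_{g_0}\in\Mod(S_{g_0})$ with $g_0=k/2$ lying in the Torelli group (a product $T_AT_B^{-1}$ of multitwists with $[\alpha_i]=\pm[\beta_i]$), so that $\kappa(f_{g_0})=2g_0=k$ and $h(f_{g_0})<6\log 2$ directly from Thurston's trace formula. In the mapping torus $M_{f_{g_0}}$ it then exhibits concrete genus-two surfaces $\Sigma_0,\Sigma_1$ transverse to the suspension flow, so $[\Sigma_0],[\Sigma_1]\in\bar\C$. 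The key substitute for your nonexistent uniform bound is the monotonicity lemma $\h(u+v)<\h(u)$ for $u\in\C$, $v\in\bar\C$: this controls $\h$ at the specific points $[\Sigma]+g_0[\Sigma_0]$ and $[\Sigma]+[\Sigma_1]$, and convexity of $\bar\h$ then bounds it on the cone over their convex hull. Because $\|\Sigma_i\|=2$, one can realize every value $2g-2$ as the norm of a primitive integral class in that sub-cone, and the factor-of-two loss (whence $12\log 2$ rather than $6\log 2$) comes from comparing $\|[\Sigma]+g_0[\Sigma_0]\|\approx 4g_0$ to $\|[\Sigma]\|=2g_0-2$, not from any covering.

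Your covering idea has a second difficulty you underestimate: a degree-$(k+1)$ cover of a small-$b_1$ manifold need not have $b_1\ge k+1$. For the obvious cyclic cover dual to a fibration of $M$ with $b_1(M)=2$, the cover is the mapping torus of $\phi^{k+1}$, and its $b_1$ is $\kappa(\phi^{k+1})+1$, which is typically far smaller than $k+1$. Producing covers with prescribed $b_1$ growth is itself nontrivial, and the paper sidesteps it entirely by building the Torelli examples by hand.
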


\p{Generating examples} We start by defining a collection of mapping classes
\[ \{f_g \in \Mod(S_g) \mid g \geq 2\} \cup \{f_2' \in \Mod(S_2)\} \]
from which all of our examples $f_{g,k}$ required for Proposition~\ref{P:homology_bound_gen 2} will be generated.  More precisely, each $f_{g,k}$ will be realized as the monodromy from some fibering of some mapping torus $M_{f_g}$ with $g = \lceil k/2 \rceil$ or of the mapping torus $M_{f_2'}$. 

For each $g \geq 3$, we will describe explicit multicurves $A_g$ and $B_g$ in $S_g$, and then $f_g$ will be the difference of the two multitwists:
\[ f_g = T_{A_g} T_{B_g}^{-1}. \]
All of the $A_g,B_g$ pairs for $g \geq 3$ will be modeled on $A_3,B_3$ in $S_3$ shown in the left-hand drawing here:

\vspace*{2ex}

\begin{center}
\labellist
\small\hair 2pt
 \pinlabel {$\alpha_\ell$} [ ] at 114 48
 \pinlabel {$\beta_\ell$} [ ] at 113 11
 \pinlabel {$\alpha_m$} [ ] at 168 57
 \pinlabel {$\beta_m$} [ ] at 168 -1
 \pinlabel {$\alpha_r$} [ ] at 227 45
 \pinlabel {$\beta_r$} [ ] at 227 11
 \pinlabel {$\alpha$} [ ] at 71 -2
 \pinlabel {$\beta$} [ ] at 25 61
 \pinlabel {$\alpha$} [ ] at 189 -2
 \pinlabel {$\beta$} [ ] at 143 61
 \pinlabel {$\alpha$} [ ] at 241 46
 \pinlabel {$\beta$} [ ] at 241 11
 \pinlabel {$\alpha$} [ ] at 266 0
 \pinlabel {$\beta$} [ ] at 266 58
 \endlabellist
\noindent \includegraphics[scale=1.2]{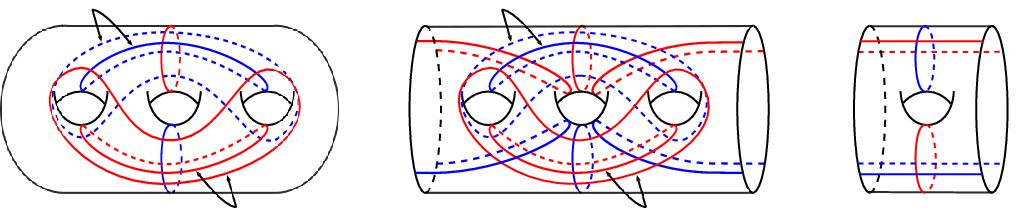}
\end{center}

\vspace*{2ex}

We will build most of the other examples for $g \geq 3$ by gluing copies of the marked surfaces shown in the middle and right-hand pictures above (by a marked surface we mean a surface with labeled curves and arcs in it).

Let $P$ denote the marked surface in the middle and $Q$ the marked surface on the right.  We will need several versions of $P$; for any subset $J$ of $\{\ell,m,r\}$ let $P(J)$ denote the marked surface obtained from $P$ by excluding all curves and arcs whose labels have subscripts not in $J$; we will write, for instance, $P(\ell,r)$ instead of $P(\{\ell,r\})$.  The two pairs of curves labeled simply by $\alpha$ and $\beta$ will appear in each copy of $P$.

For $X_1,\dots, X_n \in \{P(J) \mid J \subseteq \{\ell,m,r\} \} \cup \{Q\}$, we will write $X_1 + \cdots + X_n$ for the marked surface obtained by gluing the $X_i$ end to end and capping the boundary of the resulting surface with two disks (this operation is not commutative!).  We will also write $c \cdot X_i$ for the sum of $c$ copies of $X_i$.  Because the six arcs in $P$ and $Q$ are consistently labelled, such a sum always results in a marked surface where each curve can be consistently labeled as an $\alpha$-curve or a $\beta$-curve, as long as each $P(J)$-piece with $r \in J$ is followed by either a $Q$-piece or a $P(J)$-piece with $\ell \in J$ (and similarly for the $P(J)$ with $\ell \in J$).
For example, $P(r) + Q + P(\ell)$ is the following marked surface of genus seven:

\begin{center}
\labellist
\small\hair 2pt
\endlabellist
\noindent \includegraphics[scale=1.2]{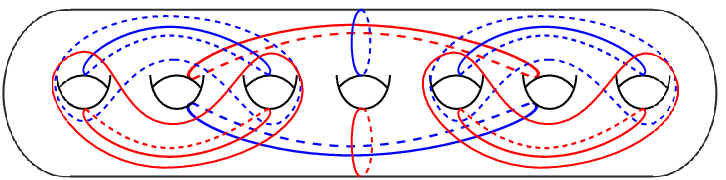}
\end{center}

With this notation in hand, we can describe all of our $A_g$ and $B_g$ with $g \geq 3$ and $g \notin \{4,5,8\}$ as follows.  For $g=3$ we take $P(m)$ as above.  Then for any $k \geq 2$, we take
\[
\begin{cases}
P(m,r) + (k-2) \cdot P(\ell,r) + P(\ell) & g = 3k \\
P(r) + Q + (k-2) \cdot P(\ell,r) + P(\ell) & g = 3k+1 \\
P(r) + Q  + (k-2) \cdot P(\ell,r) + Q+ P(\ell) & g = 3k+5
\end{cases}
\]
We define $A_g$ and $B_g$ as the unions of the $\alpha$-curves and the $\beta$-curves, respectively.  

The cases of $g \in \{4,5,8\}$ are described by the following pictures:

\vspace*{2ex}

\begin{center}
\labellist
\small\hair 2pt
\endlabellist
\noindent \includegraphics[scale=1.2]{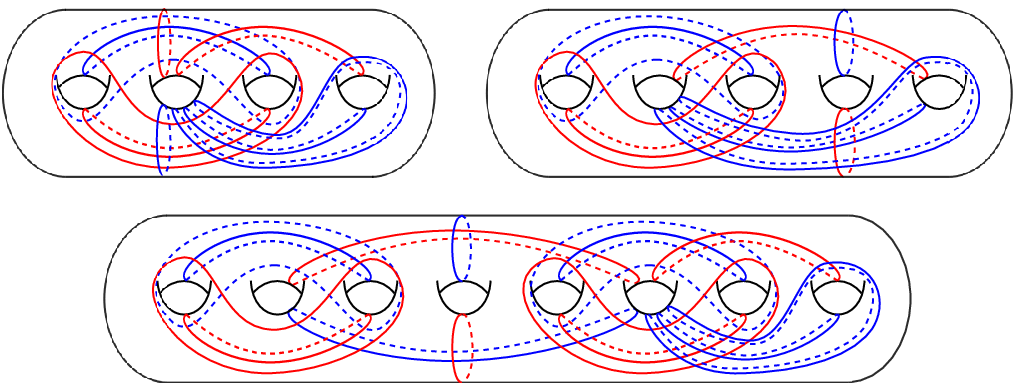}
\end{center}

\vspace*{2ex}

Here we did not label the curves as $\alpha$-curves or $\beta$-curves, but assigning the label of $\alpha$ or $\beta$ to any one of the curves determines the labels on the other curves (and it does not matter which of the two labelings we use).

Finally for $g=2$, we have $A_2,B_2$ and $A_2',B_2'$ shown in the left- and right-hand sides of the following figure, respectively:

\vspace*{4ex}

\labellist
\small\hair 2pt
 \pinlabel {$\beta$} [ ] at 50 55
 \pinlabel {$\alpha$} [ ] at 23 17
 \pinlabel {$\alpha$} [ ] at 180 55
 \pinlabel {$\beta_1$} [ ] at 142 14
 \pinlabel {$\beta_0$} [ ] at 169 34
\endlabellist
\centerline{\includegraphics[scale=1.1]{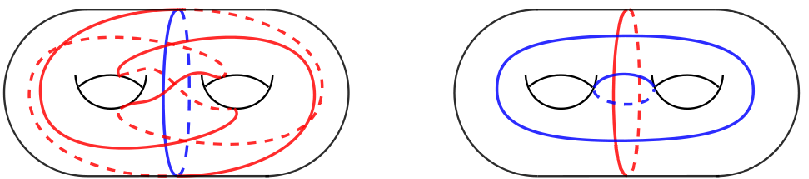}}

\vspace*{2ex}

\noindent and we define $f_2 = T_{A_2}T_{B_2}$ and $f_2' = T_{A_2'}T_{B_2'}$ (all twists are positive here).

\bigskip

\p{Properties of the generating examples} Our next goal is to bound from above the entropies of the $f_g$.  We will require the following theorem of Thurston \cite[Theorem 7]{ThMCG}.  For the statement we say that a pair of (minimally-intersecting) multicurves fills a closed surface if it cuts the surface into a disjoint union of disks.

\begin{theorem}[Thurston]
\label{theorem:thurston construction}
If $A=\{\alpha_1,\dots,\alpha_m\}$ and $B=\{\beta_1,\dots,\beta_n\}$ are multicurves that fill a surface $S$, then there is a homomorphism $\langle T_A , T_B \rangle \to \PSL(2,\R)$ given by
\[ T_A \mapsto \left( \begin{array}{cc} 1 & \sqrt{\mu} \\ 0 & 1\\ \end{array} \right) \quad \text{and}\quad T_B \mapsto \left( \begin{array}{cc} 1 & 0\\ -\sqrt{\mu} & 1\\ \end{array} \right), \]
where $\mu$ is the Perron--Frobenius eigenvalue of $NN^T$ and $N$ is the matrix $N_{ij} =  i(\alpha_i,\beta_j)$.  Moreover, an element of $\langle T_A , T_B \rangle$ is pseudo-Anosov if and only if its image is hyperbolic and in that case the entropy equals the logarithm of the spectral radius of the image.
\end{theorem}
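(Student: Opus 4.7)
The plan is to realize $S$ as a singular Euclidean surface on which $T_A$ and $T_B$ act as affine diffeomorphisms with precisely the prescribed derivatives; the displayed homomorphism is then the standard derivative representation $\mathrm{Aff}(S) \to \PSL(2,\R)$ of the affine group of that flat structure, restricted to $\langle T_A, T_B \rangle$.

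To build the structure, let $v=(a_1,\ldots,a_m)$ and $w=(b_1,\ldots,b_n)$ be positive Perron--Frobenius eigenvectors of $NN^T$ and $N^TN$, both with eigenvalue $\mu$, normalized so that $Nw=\sqrt{\mu}\,v$ and $N^Tv=\sqrt{\mu}\,w$; the filling hypothesis supplies the irreducibility needed for these to exist and be positive. For each intersection point $p\in\alpha_i\cap\beta_j$ take a Euclidean rectangle $R_p$ of horizontal width $b_j$ and vertical height $a_i$, and glue these rectangles along their sides in the combinatorial pattern of $A\cup B\subset S$. The result is a singular Euclidean metric on $S$ whose horizontal and vertical foliations refine $A$ and $B$ respectively, with cone singularities only at rectangle corners.

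The crucial computation is that the total horizontal length of $\alpha_i$ in this metric is $\sum_j i(\alpha_i,\beta_j)\,b_j = (Nw)_i = \sqrt{\mu}\,a_i$, which is exactly $\sqrt{\mu}$ times the height $a_i$ of its horizontal strip neighborhood. Hence the positive Dehn twist $T_{\alpha_i}$ can be realized as an affine horizontal shear of slope $\sqrt{\mu}$ supported on that strip. Because this shear matrix is identical for every $i$ and the strips have disjoint interiors, the individual shears patch together to realize $T_A$ as a globally affine map on $S$ with derivative $\left(\begin{smallmatrix} 1 & \sqrt{\mu} \\ 0 & 1 \end{smallmatrix}\right)$; the symmetric computation on the vertical $\beta$-strips gives $T_B$ the claimed derivative. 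Taking derivatives of arbitrary words in $T_A^{\pm 1}, T_B^{\pm 1}$ now defines the asserted homomorphism.

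For the pseudo-Anosov/entropy statement I would invoke the standard principle that an orientation-preserving affine diffeomorphism of a singular Euclidean surface is pseudo-Anosov exactly when its derivative is hyperbolic; in that case the two eigendirections of the derivative are invariant measured foliations stretched by the respective eigenvalues, so the stretch factor of the map equals the spectral radius of the derivative and $h(f)$ is its logarithm. Elliptic and parabolic derivatives produce finite-order and reducible mapping classes respectively. The most delicate step will be the flat-structure construction itself: checking that the rectangle data glues coherently back to $S$, that the horizontal strip around each $\alpha_i$ really has height exactly $a_i$ (rather than some larger sum), and that the local shears agree across strip boundaries to give a single globally affine map. This is where the filling hypothesis is essential; once the structure exists, the pseudo-Anosov equivalence and the entropy formula are routine consequences of standard facts about affine maps of flat surfaces.
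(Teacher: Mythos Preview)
The paper does not prove this theorem; it is stated without proof and attributed to Thurston with a citation to \cite[Theorem~7]{ThMCG}, then used as a black box in the proof of Lemma~\ref{lemma:dil}. There is therefore no proof in the paper to compare against.

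That said, your sketch is the standard argument for Thurston's construction---essentially the one in Thurston's original paper and reproduced, for instance, in the Primer on Mapping Class Groups: build a singular flat metric on $S$ out of rectangles whose side lengths come from the Perron--Frobenius eigenvectors of $NN^T$ and $N^TN$, verify that each multitwist acts affinely with the displayed derivative (your length computation $(Nw)_i=\sqrt{\mu}\,a_i$ is exactly the point), and then invoke the dictionary between the derivative's trace and the Nielsen--Thurston type of an affine map. The outline is correct, and the steps you flag as delicate (that the rectangles glue coherently back to $S$, that the strip around $\alpha_i$ has height exactly $a_i$, and that the local shears agree across strips) are precisely the places where the filling hypothesis and the connectedness of $A\cup B$ are used.
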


\begin{lemma}
\label{lemma:dil}
Let $g \geq 2$.  Then $f_g$ is pseudo-Anosov with $h(f_g) < \entropy$.  Also, $f_2'$ is pseudo-Anosov with $h(f_2') < 2$.
\end{lemma}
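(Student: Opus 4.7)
The plan is to apply Thurston's construction, Theorem~\ref{theorem:thurston construction}, to each pair $(A_g,B_g)$ for $g\geq 2$ and to $(A_2',B_2')$. The argument breaks into two tasks: (i) verify that each pair fills its surface, which both makes the construction applicable and (after a hyperbolicity check) yields pseudo-Anosov; and (ii) bound the Perron--Frobenius eigenvalue $\mu$ of $NN^T$, where $N_{ij}=i(\alpha_i,\beta_j)$, uniformly in $g$. Task (i) is topological: each generating piece $P(J)$ and $Q$ is set up so that its labelled arcs, together with the curves that close up at the gluings, cut the piece into disks, and this can be checked piece by piece, with separate direct verification for $g\in\{2,4,5,8\}$.

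Given filling, Thurston's representation sends
\[ T_{A_g}T_{B_g}^{-1}\mapsto \begin{pmatrix} 1+\mu & \sqrt{\mu}\\ \sqrt{\mu} & 1\end{pmatrix} \qquad\text{and}\qquad T_AT_B\mapsto \begin{pmatrix} 1-\mu & \sqrt{\mu}\\ -\sqrt{\mu} & 1\end{pmatrix}, \]
the second formula applying to $f_2$ and $f_2'$. For $g\geq 3$ the trace $2+\mu$ is greater than $2$, so $f_g$ is automatically pseudo-Anosov once the pair fills, and the spectral radius $\lambda$ satisfies $\lambda+\lambda^{-1}=2+\mu$. Consequently the desired bound $h(f_g)<6\log 2=\log 64$ translates into $\mu<62+\tfrac{1}{64}$. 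In the positive-twist cases, the trace is $2-\mu$, so the image is hyperbolic exactly when $\mu>4$; the target $h(f_2')<2$ becomes $\mu<2+e^2+e^{-2}\approx 9.52$, and $h(f_2)<6\log 2$ becomes $\mu<66+\tfrac{1}{64}$.

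To bound $\mu$, I would use the block-tridiagonal form of $N$ that arises from the decomposition $X_1+\cdots+X_n$: the only nonzero intersections $i(\alpha_i,\beta_j)$ come from curves lying in a common summand or meeting across a single gluing, and each nonzero entry is a small explicit integer visible from the pictures. This makes every row and column sum of $N$ bounded by a constant $C$ that depends only on the local piece structure, so the elementary Perron--Frobenius bound gives $\mu\leq\|NN^T\|_\infty\leq C^2$. When the crude row-sum estimate is not sharp enough, I would refine it by exhibiting a positive test vector $v$ (for example constant along each piece) satisfying $NN^Tv\leq \mu_0 v$ coordinatewise, pinning $\mu$ below the required threshold. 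For $g\in\{2,3,4,5,8\}$ the matrices are small enough that $\mu$ can be computed directly, which also verifies the lower bound $\mu>4$ needed to conclude that $f_2$ and $f_2'$ are pseudo-Anosov.

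The main obstacle is the arithmetic in step (ii): pushing the row-sum estimate through for every chain length to secure $C^2<62$, and then squeezing out the considerably tighter $\mu<9.52$ needed for $f_2'$. The block-tridiagonal form strongly suggests that $\mu$ stabilizes as $g\to\infty$, so the worst behavior is expected in the small-genus and exceptional cases, and it is there that careful direct computation with $NN^T$ will be required; once those are in hand, the long-chain cases follow from the uniform structural bound.
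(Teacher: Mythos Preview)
Your plan is essentially the paper's: apply Thurston's construction, reduce the entropy bound to a bound on the Perron--Frobenius eigenvalue $\mu$ of $NN^T$, and combine row-sum estimates for the infinite families with direct computation in small genus. One point worth flagging: the crude bound $\mu\le\|N\|_\infty\|N\|_1\le C^2$ does \emph{not} suffice here (in the $g\equiv 2\pmod 3$ family some row sums of $N$ reach $10$, giving $C^2=100$), so your anticipated refinement is genuinely needed. Rather than a test vector, the paper refines by computing the row sums of $NN^T$ directly from the bipartite intersection graph: since every nonzero entry of $N$ equals $2$, the row sum at an $\alpha$-vertex is $4$ times the sum of the degrees of its $\beta$-neighbors, and a short case analysis of which degrees can be adjacent yields row sums at most $48$ or $52$ in the three residue classes, hence $\mu<62$. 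The exceptional genera $3,4,5,8$ are checked explicitly (the worst is $\mu_8\approx 61.98$), and for $f_2$ and $f_2'$ the matrix $NN^T$ is $1\times 1$ with value $64$ and $8$ respectively, giving $h(f_2)<\log 62<6\log 2$ and $h(f_2')<\log 6<2$ on the nose.
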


\begin{proof}

We first deal with the case $g \geq 3$.  Under the map $\langle T_{A_g} , T_{B_g} \rangle \to \PSL(2,\R)$ from Theorem~\ref{theorem:thurston construction}, the image of $f_g$ is
\[  \left( \begin{array}{cc} 1 & \sqrt{\mu_g} \\ 0 & 1\\ \end{array} \right) \left( \begin{array}{cc} 1 & 0\\ -\sqrt{\mu_g} & 1\\ \end{array} \right)^{-1} \]
where $\mu_g$ is the Perron--Frobenius eigenvalue of the matrix $NN^T$ for $A_g$ and $B_g$.  
The trace of this matrix is $2+\mu_g$.  Thus by Theorem~\ref{theorem:thurston construction} the entropy $h(f_g)$ is bounded above by $\log(\mu_g+2)$.  Therefore, to prove the lemma it is enough to show that $\mu_g \leq 62$ for all $g\geq 3$.

For the special cases $g \in \{3,4,5,8\}$ we can use a computer to check that $\mu_g < 62$; the largest is $\mu_8 \approx 61.978$.

The remaining values of $g \geq 3$ are covered by our general construction.  There are three cases, according to the residue modulo three, and we treat each in turn.  In these cases we will bound $\mu_g$ from above using the fact that the Perron--Frobenius eigenvalue of a Perron--Frobenius matrix is bounded from above by the maximum row sum of the matrix.

To determine the row sums of $NN^T$, we draw the labeled bipartite graph associated to $A_g$ and $B_g$.  This graph has one vertex for each connected component of $A_g$, one vertex for each connected component of $B_g$, and an edge between two vertices corresponding to simple closed curves with nonzero intersection; the labels are the geometric intersection numbers.  Here is the picture for the case of $g=3k$ (the labels here are suppressed since they are all equal to two):

\begin{center}
\noindent \includegraphics[scale=.4]{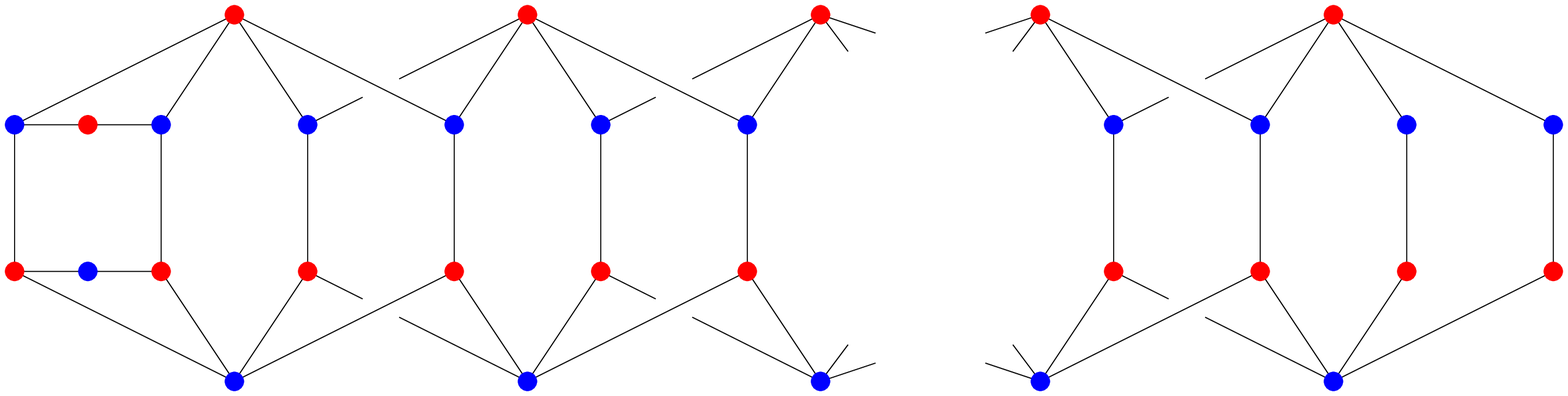}
\end{center}

In general, to compute the row sum for $NN^T$ corresponding to a particular $\alpha$-curve we take the corresponding vertex of the bipartite graph and for each (possibly-backtracking) path of length two starting from that vertex we take the product of the two labels; the sum over all paths of length two is the row sum.  In the case where all of the graphs have the same label $n$, this number is equal to $n^2$ times the sum of the degrees of the vertices adjacent to the given vertex.

In the case of the graph given above for $g = 3k$ the degree of every vertex is bounded above by 4, and moreover, every vertex of degree $4$ is only adjacent to vertices of degree at most $3$.  Since the labels are all equal to 2, each row sum of $NN^T$ is at most $2^2 \cdot 3 \cdot 4 = 48<62$, as desired.

Next we consider the case where $g = 3k+2$ and $k \geq 3$.  In this case, the graph takes the following form (again all labels are equal to 2):

\begin{center}
\noindent \includegraphics[scale=.4]{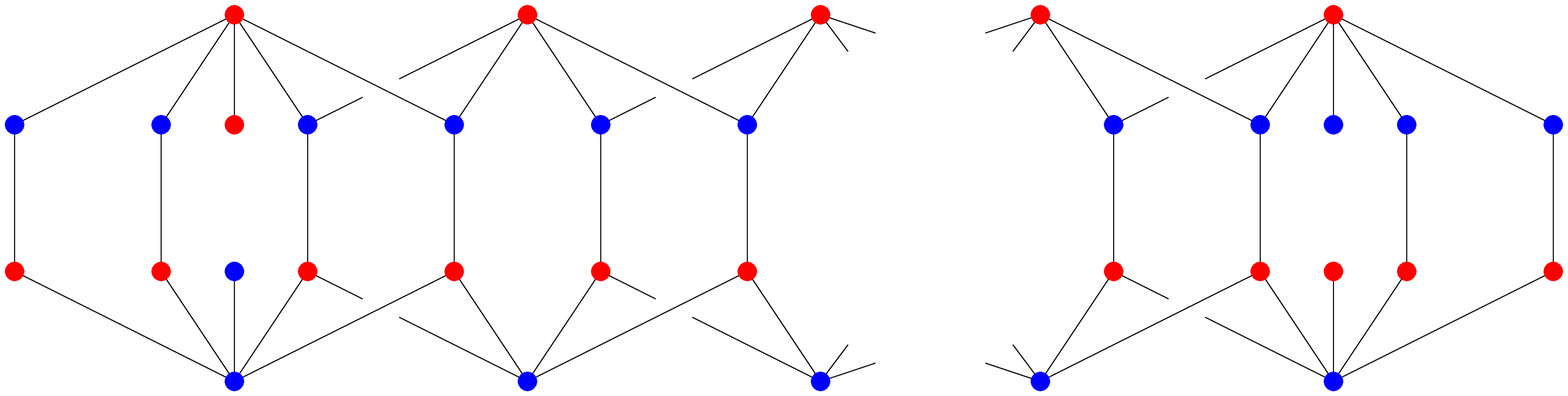}
\end{center}

In this case, there are vertices in the graph with degree 5, but the vertices adjacent to those have degrees equal to 1, 2, 2, 3, and 3, and so the row sums corresponding to the vertices of degree 5 are at most $2^2 \cdot (1+2+2+3+3)=44$.  The vertices of valence $4$ are only adjacent to vertices of valence at most $3$, so the row sums corresponding to vertices of valence 4 are again at most $48$ as in the previous case.   Finally, vertices of degree at most $3$ are adjacent to at most two vertices of degree at most 5 and a vertex of degree at most 3, and so the row sum associated to any remaining vertex is at most $2^2 \cdot (5+5+3) = 52$.  Thus all row sums are again less than 62.

For the case where $g = 3k+1$, the corresponding graph is a subgraph of the one for $g' = 3k'+2$ for any $k' \geq k$, and so the row sums are bounded above by the maximum row sums in that case, and are thus less than 62, as desired.

It remains to treat the cases of $f_2$ and $f_2'$.  In those cases the image in $\PSL(2,\R)$ is
\[  \left( \begin{array}{cc} 1 & \sqrt{\mu} \\ 0 & 1\\ \end{array} \right) \left( \begin{array}{cc} 1 & 0\\ -\sqrt{\mu} & 1\\ \end{array} \right) \]
and so the entropies are bounded above by $\log(\mu_2-2)$ and $\log(\mu_2'-2)$, where $\mu_2$ and $\mu_2'$ are the associated Perron--Frobenius eigenvalues.  For $f_2$ we have $NN^T = (8)(8)=(64)$ and so $\mu_2 = 64$; hence $h(f_2) < \log(62)$.  Similarly for $f_2'$ we have $NN^T = (2 \,\, 2)(2 \,\, 2)^T = (8)$, implying $\mu_2' = 8$ and hence $h(f_2') < \log(6) < 2$.  
\end{proof}

For the next lemma, recall that for any $\vec v \in H_1(S_g)$ there is a transvection $\tau_{\vec v} : H_1(S_g) \to H_1(S_g)$ defined by
\[ \tau_{\vec v}(\vec w) = \vec w + \hat\imath(\vec w,\vec v) \vec v,\]
where $\hat\imath$ is the algebraic intersection number.   Note that $\tau_{\vec v} = \tau_{-\vec v}$.

The action on $H_1(S_g)$ of a Dehn twist $T_\alpha$ is a transvection $\tau_{[\vec \alpha]}$, where $\vec \alpha$ is $\alpha$ with some choice of orientation and $[\vec \alpha]$ is the associated homology class (see \cite[Proposition 6.3]{primer}); since $\tau_{\vec v} = \tau_{-\vec v}$ this transvection is well defined.  

\begin{lemma}
\label{lemma:kappa}
For $g \geq 2$ we have $\kappa(f_g) = 2g$.  Also, $\kappa(f_2') = 2$.  
\end{lemma}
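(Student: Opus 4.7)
The plan is to exploit the formula recalled just before the lemma: each Dehn twist $T_\gamma$ acts on $H_1(S_g;\R)$ as the transvection $\tau_{[\gamma]}$, which depends only on the homology class $[\gamma]$ up to sign (since $\tau_v=\tau_{-v}$). Because the curves in any of our multicurves are pairwise disjoint, their homology classes are symplectically orthogonal, so the associated transvections commute, and
\[
T_{A_g}(w) \;=\; w + \sum_i \hat\imath(w,[\alpha_i])\,[\alpha_i],
\]
with an analogous formula for $T_{B_g}$.

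For $f_g = T_{A_g} T_{B_g}^{-1}$ with $g \geq 3$: first I would exhibit, for each $g$, a bijection $\sigma$ between the components of $A_g$ and of $B_g$ such that $[\alpha_i] = \pm[\beta_{\sigma(i)}]$ in $H_1(S_g;\Z)$. This should be read off the construction: in each building block $P(J)$ and $Q$, matching $\alpha$/$\beta$-labels are drawn as parallel pairs that cobound annuli, and the correspondence persists after gluing pieces end to end because the arcs close up consistently across gluings. Once the pairing is in hand, the displayed formula shows that $T_{A_g}$ and $T_{B_g}$ induce the same map on $H_1(S_g;\R)$, so $f_g$ acts as the identity on $H_1(S_g;\R)$, giving $\kappa(f_g) = 2g$. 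For the exceptional genera $g \in \{3,4,5,8\}$, the same pairing is verified directly from the three individual pictures.

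For $f_2 = T_{A_2} T_{B_2}$: here both twists are positive so the pairing cancellation used above does not apply. Instead I would read off the picture that both $\alpha$ and $\beta$ are separating simple closed curves in $S_2$, hence $[\alpha]=[\beta]=0$. Then both $T_{A_2}$ and $T_{B_2}$ act trivially on $H_1(S_2;\R)$, so $\kappa(f_2) = 4 = 2g$.

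For $f_2' = T_{A_2'} T_{B_2'}$: the picture shows $\alpha$ is separating (so $T_\alpha$ is trivial on $H_1$), while $\beta_0,\beta_1$ are disjoint nonseparating curves whose homology classes are $\Z$-linearly independent. Then $f_2'$ acts on $H_1(S_2;\R)$ as the commuting product $\tau_{[\beta_0]}\tau_{[\beta_1]}$, whose fixed subspace is $[\beta_0]^\perp \cap [\beta_1]^\perp$, of dimension $\dim H_1(S_2;\R) - 2 = 2$. Hence $\kappa(f_2')=2$.

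The main obstacle I anticipate is the combinatorial bookkeeping for the pairing in the $g \geq 3$ case, namely tracking how the arcs labeled $\alpha$ and $\beta$ within each $P(J)$ or $Q$ close up across gluings into pairs of homologous simple closed curves. I expect this to be routine thanks to the symmetric design of the building blocks, but it will require separate inspection across the three residue cases $g = 3k$, $3k+1$, $3k+5$, together with the small-genus exceptions.
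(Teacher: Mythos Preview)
Your approach is essentially the paper's: use that Dehn twists act by transvections and pair the $\alpha$-curves with $\beta$-curves so that $[\alpha_i]=\pm[\beta_{\sigma(i)}]$, forcing $T_{A_g}$ and $T_{B_g}$ to agree on homology. Your treatment of $f_2$ (both curves separating) and $f_2'$ (fixed set $=[\beta_0]^\perp\cap[\beta_1]^\perp$, which is the span of $[\beta_0],[\beta_1]$) is correct and in fact a bit more explicit than the paper's.

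There is one genuine gap. For $g\in\{4,5,8\}$ you assert that ``the same pairing is verified directly from the three individual pictures.'' But in each of these exceptional cases the construction includes one additional Dehn twist about a \emph{separating} simple closed curve, so $A_g$ and $B_g$ do not have the same number of components and no bijection as you describe can exist. The fix is exactly what the paper does: pair up the nonseparating curves as in the generic case, and observe that the leftover separating curve has trivial homology class, so its transvection is the identity. With that correction your argument goes through.

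(A small side remark: $g=3$ is not one of the exceptional pictures; it is the base case $P(m)$ of the general construction, and there the pairing is immediate.)
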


\begin{proof}

First we treat the case of $f_g$ with $g \notin \{2,4,5,8\}$.  In these cases, $f_g$ has the form:
\[ \left(T_{\alpha_1} \cdots T_{\alpha_n}\right)\left(T_{\beta_1} \cdots T_{\beta_n}\right)^{-1}\]
where $[\vec \alpha_k] = \pm [\vec \beta_k]$ for all $k$ (after choosing arbitrary orientations).  Thus the action of $f_g$ on $H_1(S_g)$ is given by
\[ \left(\tau_{[\vec \alpha_1]} \cdots \tau_{[\vec \alpha_n]}\right)\left(\tau_{[\vec \alpha_1]} \cdots \tau_{[\vec \alpha_n]}\right)^{-1}.\]
Thus the action of $f_g$ on on $H_1(S_g)$ is trivial and so $\kappa(f_g) = 2g$.

The cases of $g \in \{4,5,8\}$ are similar, except in each case there is one additional Dehn twist about a separating simple closed curve.  Since a separating curve is null-homologous, the associated transvection is trivial and it follows again that $\kappa(f_g) = 2g$.

The cases of $f_2$ and $f_2'$ are also similar.  First $f_2$ is a product of Dehn twists about separating curves, so it acts trivially on $H_1(S_2)$.  For the case of $f_2'$ the 2-dimensional subspace of $H_1(S_2)$ that is preserved is the one spanned by the two nonseparating curves in $A_2' \cup B_2'$. 
\end{proof}

\p{Mapping tori and surfaces of genus two in the fibered cone} As mentioned, our examples $f_{g,k}$ will all be derived from the mapping classes $f_g$ already defined.  The main vehicle for doing this is the following theorem of Thurston.

\begin{theorem}[Thurston] \label{T:thurston}
Let $M$ be a hyperbolic 3-manifold of finite volume.  There is a norm $\|\cdot\|$ on $H_2(M)$ with the following properties:
\begin{enumerate}
\item there is a set of maximal open cones $\C_1,\dots,\C_n$ in $H_2(M)$ and a bijection between the set of homotopy classes of connected fibers of fibrations $M \to S^1$ and the set of primitive integral points in the union of the $\C_i$; 
\item the restriction of $\| \cdot \|$ to any $\C_i$ is linear;
\item if $\Sigma$ is a fiber in some fibration $M \to S^1$, then $\|[\Sigma]\| = -\chi(\Sigma)$; and
\item if a surface $\Sigma \subseteq M$ is transverse to the suspension flow associated to some fibration $M \to S^1$, then $[\Sigma]$ lies in the closure $\bar \C_i$ of the corresponding open cone $\C_i$.
\end{enumerate}
\end{theorem}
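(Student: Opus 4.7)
The plan is to follow Thurston's original argument from ``A norm for the homology of 3-manifolds.'' First I would define, for integral $\alpha \in H_2(M;\Z)$,
\[ \|\alpha\| = \min\{-\chi_-(\Sigma) : [\Sigma] = \alpha,\ \Sigma \subset M \text{ embedded, oriented}\}, \]
where $\chi_-(\Sigma)$ is the sum of $|\chi|$ over only those components of negative Euler characteristic. Since $M$ is hyperbolic of finite volume it contains no essential spheres or tori, so one argues that norm-minimizers have only components with $\chi < 0$, and the resulting semi-norm is actually a norm. Linearity on rays is immediate (take $k$ parallel copies of a minimizer), while subadditivity follows from a double-curve-sum cut-and-paste on two minimizers that does not increase $\chi_-$. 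Because this norm takes integer values on $H_2(M;\Z)$, its unit ball is a finite rational polytope; the open cones $\C_i$ over the top-dimensional faces are the candidates for property (1), and property (2) is just the statement that the norm is linear over each cone on a flat face.

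Next I would establish that primitive integral fibered classes are exactly the primitive integral points of some union of such cones. For the easy direction, if $M \to S^1$ is a fibration with connected fiber $\Sigma$, then a standard argument shows $\|[\Sigma]\| = -\chi(\Sigma)$, giving property (3); a perturbation using non-singular closed $1$-forms on $M$ representing nearby cohomology classes shows that the set of fibered classes is open in $H_2(M;\R)$, so $[\Sigma]$ lies in the interior of some top-dimensional face of the unit ball, which must then be a rational polytope face with all of its interior integer points fibered. For the converse (the harder step), starting from a primitive integral class in the interior of such a ``fibered face,'' one takes a norm-minimizing representative and invokes Stallings' fibration theorem together with norm-minimality to show its complement is a product $\Sigma \times I$, producing the desired fibration.

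For property (4), a surface transverse to the suspension flow of a fibration representing $\C_i$ meets every closed flow line nonnegatively, so its class pairs nonnegatively with the cohomology class dual to that flow; since the latter cuts out $\overline{\C_i}$ as a component of its nonnegative halfspace in $H_2(M;\R)$, we conclude $[\Sigma] \in \overline{\C_i}$. The main obstacle is the fibration existence direction in property (1): passing from a cohomology class interior to a face to an actual honest fibration requires a careful analysis of how norm-minimality propagates across the face, and is really the heart of Thurston's theorem. In the special setting of mapping tori of pseudo-Anosovs, one could simplify some of these steps (for instance, using that the canonical fibration is already known), but a general argument seems unavoidable.
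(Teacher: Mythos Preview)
The paper does not give a proof of this theorem: it is stated as a result of Thurston (with the Thurston norm reference \cite{ThNorm} in the introduction) and used as a black box.  So there is no proof in the paper to compare against; your outline is essentially Thurston's original argument, and points (1)--(3) are sketched correctly.

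There is, however, a genuine gap in your argument for property~(4).  You write that a surface transverse to the suspension flow ``pairs nonnegatively with the cohomology class dual to that flow,'' and that this single class ``cuts out $\overline{\C_i}$ as a component of its nonnegative halfspace.''  But $\overline{\C_i}$ is not a halfspace unless $b_2(M)=1$; a single linear inequality cannot isolate a cone over a top-dimensional face of a polytope.  What you actually get from transversality is that $[\Sigma]$ pairs nonnegatively with the homology class of \emph{every} closed orbit $\gamma$ of the suspension flow, i.e.\ $[\Sigma]\cdot[\gamma]\geq 0$ for all such $\gamma$.  To conclude $[\Sigma]\in\overline{\C_i}$ you then need the nontrivial fact that $\overline{\C_i}$ is exactly the dual cone to the cone in $H_1(M;\R)$ spanned by the closed orbits; this is a theorem of Fried \cite{Fr0,Fr} (and is implicit in the paper's later use of Theorem~\ref{theorem:fried}).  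Without invoking that characterization, your argument for (4) does not go through.
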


The norm $\|\cdot\|$ in Theorem~\ref{T:thurston} is now called the Thurston norm and the open cones $\C_i$ are called the \emph{fibered cones} of $M$. 

Our next immediate goal is to use Theorem~\ref{T:thurston} to find surfaces of genus two that represent points in the closure of the fibered cones corresponding to our $f_g$.  We need a preliminary lemma.

\begin{lemma}
\label{lemma:boundary prelim} 
Let $A = \{\alpha_1,\dots,\alpha_m\}$ and $B=\{\beta_1,\dots,\beta_n\}$ be multicurves in $S_g$.  Suppose that $\alpha_1$ and $\beta_1$ are homologous, disjoint simple closed curves that bound a subsurface of genus one, and that $\delta$ is a simple closed curve in $S_g$ with $i(\delta,\alpha_1) = i(\delta,\beta_1) = 1$ and all other intersection numbers $i(\delta,\alpha_k)$ and $i(\delta,\beta_k)$ equal to zero.  Then there are representatives of $\delta$ and $T_AT_B^{-1}(\delta)$ that are disjoint and bound a subsurface of genus one.
\end{lemma}

\begin{proof}
 
We first use the fact that the $T_{\alpha_i}$ pairwise commute and that the $T_{\beta_i}$ pairwise commute to write 
\[ T_AT_B^{-1} = \left(T_{\alpha_2} \cdots T_{\alpha_m}\right)\left(T_{\alpha_1}T_{\beta_1}^{-1}\right)\left(T_{\beta_2}\cdots T_{\beta_n}\right)^{-1}.\]
Since the $\beta_i$ with $i > 1$ are disjoint from $\delta$, it follows that
\[ T_AT_B^{-1}(\delta) = \left(T_{\alpha_2} \cdots T_{\alpha_m}\right)\left(T_{\alpha_1}T_{\beta_1}^{-1}\right)(\delta).\]  
It is straightforward to check under our assumptions that $T_{\alpha_1}T_{\beta_1}^{-1}(\delta)$ and $\delta$ have disjoint representatives that bound a subsurface of genus one.  Applying  $\left(T_{\alpha_2} \cdots T_{\alpha_m}\right)$ to the pair $(T_{\alpha_1}T_{\beta_1}^{-1}(\delta),\delta)$ and using the fact that each $\alpha_i$ with $i > 1$ is disjoint from $\delta$, we conclude that there are representatives of $T_AT_B^{-1}(\delta)$ and $\delta$  that are disjoint and bound a subsurface of genus one, as desired.
\end{proof}

\begin{lemma}
\label{lemma:boundary}
Fix some $g \geq 3$.  Let $M$, $\Sigma$, and $\C$ be the mapping torus, fiber, and fibered cone corresponding to $f_g$.  There are surfaces $\Sigma_0$ and $\Sigma_1$ of genus two in $M$ with $[\Sigma_0]$ and $[\Sigma_1]$ in $\bar \C$ and so that  $[\Sigma_0]$, $[\Sigma_1]$, and $[\Sigma]$ are linearly independent.
\end{lemma}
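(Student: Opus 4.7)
My plan is to build each $\Sigma_i$ as a closed genus-two surface in $M = M_{f_g}$ obtained by capping a vertical annulus with a horizontal genus-one subsurface of $S_g$, using Lemma~\ref{lemma:boundary prelim} as the key geometric input.

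First I would locate two simple closed curves $\delta_0, \delta_1 \subset S_g$ satisfying the hypothesis of Lemma~\ref{lemma:boundary prelim}: each $\delta_i$ meets exactly one homologous pair of curves in $A_g \cup B_g$ (bounding a genus-one subsurface of $S_g$) in one point each, and is disjoint from all other components of $A_g \cup B_g$. Each junction between consecutive pieces $P(J), Q$ in the construction of $A_g, B_g$ contains such a homologous pair, so for $g \geq 3$ there are at least two distinct junctions to choose from; one picks $\delta_0, \delta_1$ as short arcs across the annular region between the two curves at distinct junctions, closed up to simple closed curves, placed so that $[\delta_0]$ and $[\delta_1]$ are linearly independent in $H_1(S_g;\Q)$. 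The exceptional cases $g \in \{4,5,8\}$ are handled by direct inspection of their pictures.

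Given such a $\delta_i$, Lemma~\ref{lemma:boundary prelim} produces a genus-one subsurface $F_i \subset S_g$ whose boundary consists of disjoint representatives of $\delta_i$ and $f_g(\delta_i)$. I then form
\[ \Sigma_i = (F_i \times \{0\}) \cup (\delta_i \times [0,1]) \subset M, \]
where the mapping-torus identification $\delta_i \times \{1\} = f_g(\delta_i) \times \{0\}$ attaches the two boundary circles of the annulus to the two boundary components of $F_i$. The Euler characteristic is $\chi(F_i) + \chi(S^1 \times [0,1]) = -2$, so $\Sigma_i$ is a closed connected orientable surface of genus two. To place $[\Sigma_i]$ in $\bar\C$ I would apply Theorem~\ref{T:thurston}(4) after a small perturbation: tilt $F_i \times \{0\}$ so that its $[0,1]$-coordinate is a smooth function on $F_i$ vanishing near $\partial F_i$ and slightly positive in the interior, and lift the annulus $\delta_i \times [0,1]$ consistently; the resulting smoothed surface is positively transverse to the suspension flow with the orientation inherited from $[\Sigma]$.

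For linear independence with $[\Sigma]$, the Wang exact sequence for $M \to S^1$ together with Lemma~\ref{lemma:kappa} (which gives trivial $f_{g*}$-action on $H_1(S_g;\Q)$) yields a splitting
\[ H_2(M;\Q) \cong \Q[\Sigma] \oplus \sigma(H_1(S_g;\Q)), \]
where $\sigma$ is the (here injective) suspension map. A chain-level computation shows that $\Sigma_i - \Sigma$ is the cycle $\delta_i \times [0,1] - (S_g \setminus F_i) \times \{0\}$, which represents $\sigma([\delta_i])$, so $[\Sigma_i] = [\Sigma] + \sigma([\delta_i])$. Linear independence of $[\delta_0], [\delta_1]$ then gives linear independence of $[\Sigma], [\Sigma_0], [\Sigma_1]$. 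The main obstacle is the transversality step: the literal $\Sigma_i$ has a vertical annular piece tangent to the flow, and one must check that the perturbation yields a globally positively transverse embedded surface with no new tangencies; a cleaner alternative would be to realize $[\Sigma_i]$ as a non-negative combination of $[\Sigma] \in \C$ and a suspension torus class lying in $\partial\bar\C$, concluding $[\Sigma_i] \in \bar\C$ by convexity.
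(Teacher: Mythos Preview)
Your plan coincides with the paper's: locate curves $\delta_0,\delta_1$ with independent homology classes satisfying the hypotheses of Lemma~\ref{lemma:boundary prelim}, cap the vertical annulus over $\delta_i$ with the genus-one cobordism between $\delta_i$ and $f_g(\delta_i)$ to obtain $\Sigma_i$, invoke Theorem~\ref{T:thurston}(4), and deduce independence of the $[\Sigma_i]$ from that of the $[\delta_i]$.  The paper carries out the last step via cap product with $[\Sigma]$, which is exactly the map $H_2(M)\to H_1(S_g)$ appearing in your Wang sequence, so the two arguments are equivalent.  One small slip: your ``junction'' description of where the $\delta_i$ sit is wrong for $g=3$ (a single piece $P(m)$, no junctions) and $g=6$ (one junction); suitable curves do exist for all $g\ge 3$, and the paper simply exhibits them in a figure rather than describing them combinatorially.

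You correctly flag transversality as the delicate point---the paper's informal description (``drag $\delta_i$ along the flow'') produces exactly your horizontal-plus-vertical surface and defers the honest construction to \cite[Proof of Lemma~5.1]{LM}---but neither of your proposed resolutions works as written.  Your perturbation keeps $F_i\times\{0\}$ fixed near $\partial F_i$, so the annulus $\delta_i\times[0,1]$ is unchanged and remains tangent to the flow; tilting only the interior of $F_i$ does nothing for this.  The convexity alternative fails for a structural reason: since $f_g$ is pseudo-Anosov no isotopy class of curves is invariant, so there are no suspension tori in $M$, and since $M$ is hyperbolic the Thurston norm is a genuine norm, so no nonzero class in $\partial\bar\C$ is represented by a torus.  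Concretely, the class $[\Sigma_i]-[\Sigma]$ you call $\sigma([\delta_i])$ is carried by the chain $\delta_i\times[0,1]-(S_g\setminus F_i)\times\{0\}$, whose horizontal piece has the \emph{wrong} orientation relative to the flow, so it pairs negatively with closed orbits passing through $S_g\setminus F_i$ and hence does not lie in $\bar\C$.  A correct fix is to dispense with the annulus entirely and realize $\Sigma_i$ as the image in $M$ of the graph of a smooth function $t\colon F_i\to[0,1]$ with $t\equiv 1$ near $\delta_i$ and $t\equiv 0$ near $f_g(\delta_i)$; being a graph over the fiber, this surface is everywhere transverse to the suspension flow.
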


\begin{proof}

For each $g \geq 3$ we can find a curve $\delta$ in $S_g$ that---together with $A_g$ and $B_g$---satisfies the hypotheses of Lemma~\ref{lemma:boundary prelim}.  In fact, we can find two such curves, $\delta_0$ and $\delta_1$, and we can choose them so that their homology classes are linearly independent:

\bigskip

\labellist
\small\hair 2pt
\pinlabel {$\delta_1$} [ ] at 150 45
\pinlabel {$\delta_0$} [ ] at 80 45
\endlabellist
\centerline{\includegraphics[scale=1]{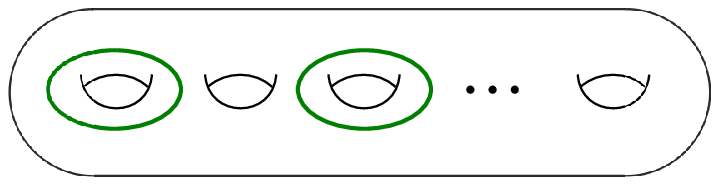}}

\vspace*{2ex}

\noindent(there are many others!).  By Lemma~\ref{lemma:boundary prelim}, $f_g(\delta_i)$ and $\delta_i$ have disjoint representatives that bound a subsurface $\Xi_i$ of genus one for each $i\in\{0,1\}$.  As described in a previous paper by the second and third authors \cite[Proof of Lemma 5.1]{LM}, we can use $\Xi_0$ and $\Xi_1$ to construct surfaces $\Sigma_0$ and $\Sigma_1$ in $M$ that have genus two and are transverse to the suspension flow on $M$ associated to $f_g$; the idea is to drag $\delta_i \subset \Xi_i$ in the direction of the suspension flow of $M$ until it meets the fiber again for the first time, at which point it coincides with $f_g(\delta_i)$ and hence gives the desired closed surface.  Because the $\Sigma_i$ can be made transverse to the suspension flow, the homology classes $[\Sigma_0]$ and $[\Sigma_1]$ both lie in the closure $\bar {\C}$ (see Theorem~\ref{T:thurston}).

Finally, we prove that $[\Sigma_0]$, $[\Sigma_1]$, and $[\Sigma]$ are linearly independent.  Since $\kappa(f_g) = 2g$, the inclusion of $\Sigma \to M$ induces an injection $H_1(\Sigma) \to H_1(M)$.   As the cap product with $[\Sigma]$ is a linear function $H_2(M) \to H_1(M)$ and is given by oriented intersection with $\Sigma$, we have $[\Sigma]  \smallfrown [\Sigma] = 0$, $[\Sigma_0] \smallfrown [\Sigma] = [\delta_0]$, and $[\Sigma_1] \smallfrown [\Sigma] = [\delta_1]$.  It then follows from the fact that $[\delta_0]$ and $[\delta_1]$ are linearly independent that $[\Sigma]$, $[\Sigma_0]$, and $[\Sigma_1]$ are linearly independent.
\end{proof}

Our next goal is to give an analogue of Lemma~\ref{lemma:boundary} for $f_2$ and $f_2'$.  We again require a preliminary lemma.

\begin{lemma}
\label{lemma:prelim}
Let $\delta$ be a nonseparating simple closed curve in $S_2$, and let $\alpha$ be an essential separating simple closed curve in $S_2$ that intersects $\delta$ essentially in two points.  Let $\tilde S_2$ be the infinite cyclic cover of $S_2$ corresponding to $\delta$.  There is a subsurface $\tilde \Sigma_0 \subseteq \tilde S_2$ with genus one and with two boundary components that are connected components of the preimages of $\delta$ and (a representative of) $T_\alpha(\delta)$, respectively.
\end{lemma}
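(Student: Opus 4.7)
The plan is to locate specific lifts of $\delta$ and $\alpha$ in $\tilde S_2$ and construct $\tilde\Sigma_0$ as a regular neighborhood of their union.

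First, both $\delta$ (by definition of the cover) and $\alpha$ (since $\alpha$ separating implies $[\alpha]=0 \in H_1(S_2)$) lie in the kernel of the map $\pi_1(S_2) \to \Z$ defining $\tilde S_2$, so both lift as disjoint unions of bijective lifts. Fix a lift $\tilde\delta_0$ of $\delta$; since $\delta$ is nonseparating, $\tilde\delta_0$ separates $\tilde S_2$ into two halves $U_\pm$, and the components of the complement of $p^{-1}(\delta)$ are fundamental domains $F_n$, each homeomorphic to $S_{1,2}$ and bounded by two consecutive lifts of $\delta$.

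Next I would identify the unique lift $\tilde\alpha_0$ of $\alpha$ meeting $\tilde\delta_0$ by showing that both intersections of $\tilde\alpha_0$ with $p^{-1}(\delta)$ lie on $\tilde\delta_0$ itself. Any bijective lift of $\alpha$ meets $p^{-1}(\delta)$ in exactly $2$ points; if those lay on distinct lifts of $\delta$, the two arcs of $\tilde\alpha$ cut out by the crossings would each have to live in a single $F_n$, and a check of the tangent data at the crossings (each arc occupies one of the two local sides at its endpoints) shows $\tilde\alpha$ could not then close up. After re-indexing, both crossings lie on $\tilde\delta_0$, and $\tilde\alpha_0$ is split by them into arcs $a_\pm \subset U_\pm$.

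Since $T_\alpha$ is supported in a small annular neighborhood of $\alpha$, whose preimage is a disjoint union of annular neighborhoods of the lifts $\tilde\alpha_n$, the deck-equivariant lift $\tilde T$ of $T_\alpha$ equals $\prod_n T_{\tilde\alpha_n}$. Only $\tilde\alpha_0$ meets $\tilde\delta_0$, so $\tilde T(\tilde\delta_0) = T_{\tilde\alpha_0}(\tilde\delta_0)$, giving a connected component of $p^{-1}(T_\alpha(\delta))$.

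Finally, let $N$ be a regular neighborhood of $\tilde\alpha_0 \cup \tilde\delta_0$ in $\tilde S_2$; it is a compact surface with $\chi(N) = \chi(\tilde\alpha_0) + \chi(\tilde\delta_0) - |\tilde\alpha_0 \cap \tilde\delta_0| = -2$. The main obstacle is showing that $\partial N$ has exactly two components, one isotopic to $\tilde\delta_0$ and the other to $T_{\tilde\alpha_0}(\tilde\delta_0)$; granted this, the formula $\chi(N) = 2 - 2g(N) - |\partial N|$ forces $g(N) = 1$ and we take $\tilde\Sigma_0 = N$. The claim reduces to a local ribbon-graph computation using the fact that the two intersections of $\tilde\alpha_0$ with $\tilde\delta_0$ have opposite algebraic signs (since $\tilde\delta_0$ is separating): the coherent resolution of the two crossings that slides $\tilde\delta_0$ across the arcs $a_\pm$ produces a disjoint curve isotopic to $T_{\tilde\alpha_0}(\tilde\delta_0)$, while the alternative coherent resolution recovers a curve isotopic to $\tilde\delta_0$.
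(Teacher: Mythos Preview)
There is a genuine gap in the final step. The regular neighborhood $N$ of $\tilde\alpha_0 \cup \tilde\delta_0$ is \emph{not} a genus-one surface with two boundary components; it is a four-holed sphere. One quick way to see this is that any two simple closed curves meeting transversally in two points can be modelled by two overlapping round circles in the plane, so their regular neighborhood is planar and hence has genus $0$; since $\chi(N)=-2$, this forces $|\partial N|=4$. More concretely, because $\tilde\delta_0$ separates $\tilde S_2$ into $U_+$ and $U_-$, the neighborhood $N\cap \overline{U_\pm}$ is a collar of $\tilde\delta_0$ with a single band (for $a_\pm$) attached to its inner boundary circle, which in an orientable surface is always a pair of pants; gluing the two pairs of pants along $\tilde\delta_0$ gives four boundary circles, not two. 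So the ribbon-graph computation you outline cannot come out the way you predict. A second symptom of the problem is that $i\bigl(T_{\tilde\alpha_0}(\tilde\delta_0),\tilde\delta_0\bigr)=i(\tilde\alpha_0,\tilde\delta_0)^2=4$, so $T_{\tilde\alpha_0}(\tilde\delta_0)$ cannot even be isotoped off $\tilde\delta_0$, let alone appear as a boundary component of $N$ (all of which are disjoint from $\tilde\delta_0$). The intuition that ``resolving the crossings of $\tilde\delta_0$ with $\tilde\alpha_0$ gives $T_{\tilde\alpha_0}(\tilde\delta_0)$'' is correct when the curves meet once, but fails here.

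The paper avoids this by using two \emph{different} lifts of $\delta$. One takes adjacent lifts $\tilde\delta_0,\tilde\delta_1$, which cobound a fundamental domain $\tilde\Sigma_0'\cong S_{1,2}$, and lets $\tilde\alpha_1$ be the lift of $\alpha$ meeting $\tilde\delta_1$ (not $\tilde\delta_0$). Then $T_{\tilde\alpha_1}$ fixes $\tilde\delta_0$ and carries $\tilde\delta_1$ to a component of the preimage of $T_\alpha(\delta)$, so $\tilde\Sigma_0 = T_{\tilde\alpha_1}(\tilde\Sigma_0')$ is the desired genus-one surface bounded by a lift of $\delta$ and a lift of $T_\alpha(\delta)$. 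Your setup (lifting, identifying $\tilde\alpha_i$'s, lifting the twist) is fine up to that point; the missing idea is to work with the slab between $\tilde\delta_0$ and $\tilde\delta_1$ rather than with a tubular neighborhood of a single $\tilde\delta_0\cup\tilde\alpha_0$.
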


\begin{proof}

Up to homeomorphism, the curves $\delta$ and $\alpha$ are arranged in $S_2$ as in the left-hand side of the following figure:

\vspace*{1ex}

\labellist
\small\hair 2pt
\pinlabel {$\alpha$} [ ] at 37 44
\pinlabel {$\delta$} [ ] at 105 28
\pinlabel {$\tilde \delta_0$} [ ] at 176 60
\pinlabel {$\tilde \delta_1$} [ ] at 213 60
\pinlabel {$\tilde \delta_2$} [ ] at 251 60
\pinlabel {$\tilde \alpha_0$} [ ] at 168 37
\pinlabel {$\tilde \alpha_1$} [ ] at 204 37
\pinlabel {$\tilde \alpha_2$} [ ] at 240 37
\endlabellist
\begin{center}
\noindent \includegraphics[scale=1]{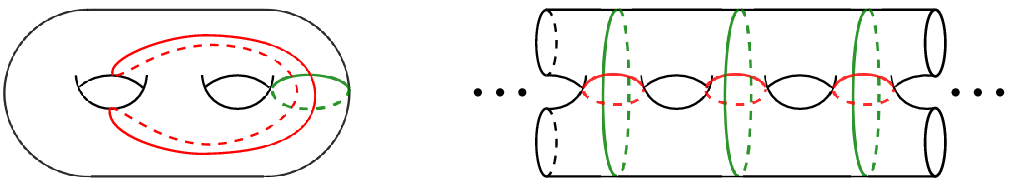}
\end{center}
and their preimages $\tilde \delta$ and $\tilde \alpha$ in $\tilde S_2$ are arranged as in the right-hand side.  Each component of $\tilde \delta$ is separating in $\tilde S_2$ and there is an induced ordering of the components; as such, we label the components as $\{\tilde \delta_i \mid i \in \Z\}$.  Each component of $\tilde \alpha$ intersects exactly one $\tilde \delta_i$ and so the components of $\tilde \alpha$ inherit labels $\tilde \alpha_i$ as in the figure.

Since the covering map $\tilde S_2 \to S_2$ restricts to a homeomorphism on each component of $\tilde \alpha$, the Dehn twist  $T_{\alpha}$ lifts to a multitwist $T_{\tilde \alpha}$ of $\tilde S_2$.  The preimage of $T_{\alpha}(\delta)$ in $\tilde S_2$ is $T_{\tilde \alpha}(\tilde \delta)$.  We claim that $T_{\tilde \alpha}(\tilde \delta_1)$ bounds a subsurface $\tilde \Sigma_0$ of genus one with $\tilde \delta_0$ (assuming $T_\alpha$ was taken to have support on a small regular neighborhood of $\alpha$).  Indeed, we can write the pair $(\tilde \delta_0, T_{\tilde \alpha}(\tilde \delta_1))$ as $(\tilde \delta_0, T_{\tilde \alpha_1}(\tilde \delta_1))$ since $\tilde \alpha_1$ is the only component of $\tilde \alpha$ that intersects $\tilde \delta_1$ and if we apply $T_{\tilde \alpha_1}^{-1}$ to this pair, we obtain the pair $(\tilde \delta_0, \tilde \delta_1)$, which obviously bounds a surface $\tilde \Sigma_0'$ of genus one.  Since $T_{\tilde \alpha_1}$ is a homeomorphism the surface $\tilde \Sigma_0 = T_{\tilde \alpha_1}(\tilde \Sigma_0')$ has genus one.  By construction, the two boundary components of $\tilde \Sigma_0$ are connected components of the preimages of $\delta$ and $T_{\alpha}(\delta)$.
\end{proof}

In Lemma~\ref{lemma:prelim}, it is possible to replace $S_2$ with $S_g$, in which case $\tilde \Sigma_0$ becomes a surface of genus $g-1$ with two boundary components.

\begin{lemma}
\label{lemma:boundary2}
Let $f$ be either $f_2$ or $f_2'$.  Let $M$, $\Sigma$, and $\C$ be the associated mapping torus, fiber, and fibered cone.  
There is a surface $\Sigma_0$ of genus two in $M$ so $[\Sigma_0]$ is linearly independent from $[\Sigma]$.
\end{lemma}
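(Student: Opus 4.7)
The plan is to adapt the proof of Lemma~\ref{lemma:boundary} to genus two, producing a single genus-two surface $\Sigma_0$ in $M$ (instead of two) and using Lemma~\ref{lemma:prelim} in place of Lemma~\ref{lemma:boundary prelim}. Passage to an infinite cyclic cover is necessary because on $S_2$ the curves $\delta$ and $f(\delta)$ typically fail to cobound a genus-one subsurface directly.

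For each $f \in \{f_2,f_2'\}$ I would select a separating twist curve $\alpha_*$ of $f$ together with a nonseparating simple closed curve $\delta \subset S_2$ satisfying $i(\delta,\alpha_*) = 2$ and disjoint from every other twist curve of $f$, so that $f(\delta) = T_{\alpha_*}(\delta)$. Lemma~\ref{lemma:prelim} applied to $(\delta,\alpha_*)$ then produces a genus-one subsurface $\tilde \Sigma_0 \subset \tilde S_2$ in the infinite cyclic cover dual to $\delta$, with boundary components lifts of $\delta$ and $f(\delta)$. Following the construction in the proof of Lemma~\ref{lemma:boundary}, I would project $\tilde \Sigma_0$ into a chosen fiber of $M$ and cap it off with the annulus traced by $\delta$ under the suspension flow, producing a (possibly immersed) closed surface $\Sigma_0 \looparrowright M$ of genus two; transversality to the suspension flow ensures $[\Sigma_0] \in \bar{\C}$ by Theorem~\ref{T:thurston}(4). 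The cap-product computation from Lemma~\ref{lemma:boundary} gives $[\Sigma_0] \smallfrown [\Sigma] = [\delta] \in H_1(M)$ and $[\Sigma] \smallfrown [\Sigma] = 0$, so linear independence reduces to proving $[\delta] \neq 0$ in $H_1(M)$.

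The main obstacle I expect lies in the $f_2'$ case. For $f_2$, since $\kappa(f_2) = 2g = 4$, the map $H_1(\Sigma) \to H_1(M)$ is injective, so nonseparating-ness of $\delta$ immediately yields $[\delta] \neq 0$. For $f_2'$, however, $\kappa(f_2') = 2$, and a direct symplectic computation shows that the kernel of $H_1(\Sigma) \to H_1(M)$---i.e.\ the image of $(f_2')_* - \mathrm{id}$---coincides with the Lagrangian fixed subspace $V$ spanned by the homology classes of the two nonseparating twist curves. Any $\delta$ meeting the constraints above is disjoint from those two curves, which forces $[\delta] \in V^{\perp} = V$, so $[\delta] = 0$ in $H_1(M)$ and the cap product degenerates. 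To treat $f_2'$ I therefore anticipate a modified construction---for example, by allowing $\delta$ to intersect one of the nonseparating twist curves and tracking $f_2'(\delta)$ via commutativity of disjoint twists as in Lemma~\ref{lemma:boundary prelim}, or by building an auxiliary surface directly from the fixed homology class $[\alpha]$ and tubing it to the fiber $\Sigma$ to obtain a genus-two class whose $V$-component in the decomposition $H_2(M;\R) \cong \R\cdot[\Sigma]\oplus V$ is nontrivial, which would yield linear independence without relying on the cap-product argument.
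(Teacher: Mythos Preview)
Your overall strategy matches the paper's: pick a nonseparating curve $\delta$ disjoint from the $B$-multicurve so that $f(\delta)=T_\alpha(\delta)$ with $\alpha$ the separating $A$-curve, apply Lemma~\ref{lemma:prelim} in the cyclic cover dual to $\delta$, and assemble the resulting genus-one piece into a closed genus-two surface $\Sigma_0\subset M$ transverse to the suspension flow.  (The paper in fact takes $\delta$ to be one of the nonseparating curves of $B_2'$ itself, and it builds an \emph{embedded} $\Sigma_0$ via a carefully chosen height function $\psi_1$ on $\tilde\Sigma_0$ rather than the immersed push-and-cap construction you sketch; but the homology class is the same.)  For $f_2$ your cap-product argument is correct and is exactly what the paper intends by ``the same way as in Lemma~\ref{lemma:boundary}.''

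You have also correctly spotted a genuine subtlety in the $f_2'$ case that the paper glosses over.  With $\delta\in\{\beta_0,\beta_1\}$ (or indeed with any $\delta$ satisfying your constraints) one has $[\delta]\in V=\langle[\beta_0],[\beta_1]\rangle=\mathrm{im}\bigl((f_2')_*-1\bigr)$, so $[\delta]=0$ in $H_1(M)$ and the cap-product map $H_2(M)\to H_1(M)$ fails to separate $[\Sigma_0]$ from $[\Sigma]$.  However, your proposed remedies---allowing $\delta$ to meet a nonseparating twist curve, or tubing to an auxiliary surface---are unnecessary.  The surface $\Sigma_0$ is already fine; only the independence argument needs adjustment.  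The Wang sequence for the fibration $\Sigma\to M\to S^1$ yields a short exact sequence
\[
0\longrightarrow H_2(\Sigma;\R)\longrightarrow H_2(M;\R)\xrightarrow{\ \cap\,\Sigma\ }\ker\bigl((f_2')_*-1\bigr)\subset H_1(\Sigma;\R)\longrightarrow 0,
\]
in which the first map sends the fundamental class to $[\Sigma]$ and the second is geometric intersection with the fiber, \emph{viewed in $H_1(\Sigma)$ rather than $H_1(M)$}.  Since $\delta$ is nonseparating, $[\delta]\neq 0$ in $H_1(\Sigma)$; hence $[\Sigma_0]$ has nonzero image under $\cap\,\Sigma$ while $[\Sigma]$ lies in its kernel, and linear independence follows.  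No modification of the construction is needed.
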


\begin{proof}

For the case of $f_2$ let $\delta$ be the nonseparating simple closed curve in $S_2$ shown in the following figure:

\vspace*{2ex}

\labellist
\small\hair 2pt
\pinlabel {$\delta$} [ ] at 80 42
\endlabellist
\centerline{\includegraphics[scale=1]{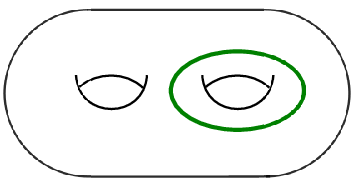}}

\vspace*{2ex}

\noindent and for the case of $f_2'$ let $\delta$ be either of the two nonseparating curves in $B_2'$.  Let $\alpha$ denote either the curve in $A_2$ or $A_2'$, according to our choice of $f$.  Since $\delta$ is disjoint from $B_2$ and $B_2'$ we have $f(\delta) = T_\alpha(\delta)$.  Let $\tilde S_{2}$ be the infinite cyclic cover of $S_{2}$ corresponding to $\delta$ and let $p : \tilde S_{2} \to S_{2}$ be the covering map.   Since $\alpha$ is a separating simple closed curve that intersects $\delta$ essentially in two points, Lemma~\ref{lemma:prelim} provides a subsurface $\tilde \Sigma_0$ of genus one and with two boundary components that are connected components of the preimages of $\delta$ and a representative of $T_\alpha(\delta)$.   We label the preimages of $\alpha$ and $\delta$ in $\tilde S_{2}$ as in Lemma~\ref{lemma:prelim}, and as in the proof of that lemma, we can take $\tilde \Sigma_0$ to lie between $\tilde \delta_0$ and $T_{\tilde \alpha}(\tilde \delta_1)$.  Our goal is to build a map $\psi : \tilde \Sigma_0 \to M$ with image an embedded, smooth closed surface of genus two transverse to the suspension flow. 

We claim there is a smooth function $\psi_1 : \tilde \Sigma_0 \to \R$ with the following properties:
\begin{enumerate}
 \item $\psi_1|\tilde \delta_0=0$ and $\psi_1|T_{\tilde \alpha}(\tilde \delta_1) = 1$,
 \item $\psi_1$ is constant on a neighborhood of $\partial \tilde \Sigma_0$,
 \item if $x,y \in \tilde \Sigma_0$ differ by a nontrivial covering transformation of $\tilde S_{2}$ then $\psi_1(x) \neq \psi_1(y)$.
\end{enumerate}
To prove the claim, we first note that there is a function $\psi_2 : \tilde S_{2} \to \R$ with the following properties:  
\begin{enumerate}
 \item $\psi_2|\tilde \delta_i=i$, 
 \item $\psi_2$ is constant on a neighborhood of $\tilde \delta$,
 \item if $x$ and $y$ in $\tilde S_{2}$ differ by a nontrivial covering transformation, then $|\psi_2(x) - \psi_2(y)| \geq 1$, and 
 \item $\psi_2(\tilde \alpha_1) \subseteq  (1/2,3/2)$.
\end{enumerate}
Such a function can be obtained by equivariantly perturbing the obvious (horizontal) height function on $\tilde S_{2}$, using the picture in the proof of Lemma~\ref{lemma:prelim}.

The function $\psi_1$ is then the restriction to $\tilde \Sigma_0$ of $\psi_2 \circ T_{\tilde \alpha_1}^{-1}$.  The only nontrivial property of $\psi_1$ to check is the third one.  By the fourth property of $\psi_2$ we can choose a closed neighborhood $A$ of $\tilde \alpha_1$ with $\psi_2(A) \subseteq  (1/2,3/2)$, and we can choose $T_{\tilde \alpha_1}^{-1}$ to be supported in this neighborhood.  It follows that $|\psi_1(x) - \psi_2(x)| < 1$ for all $x \in \tilde \Sigma$.  Combining this with the third property of $\psi_2$, we obtain the third desired property of $\psi_1$, finishing the proof of the claim.

Write $M$ as $S_2 \times [0,1] /\!\!\sim$ where $(x,0) \sim (f(x),1)$ and let $\pi : S_{2} \times [0,1] \to M$ denote the projection.  By the claim, we can define $\psi : \tilde \Sigma_0 \to M$ by
\[ \psi(x) = \pi(p(x),\psi_1(x)).\] 
The image is a smooth, embedded surface $\Sigma_0$ of genus two since the two boundary components of $\tilde \Sigma_0$ are identified in the image.  We claim that $\Sigma_0$ is transverse to the suspension flow.  Indeed, the only way this could fail would be if $p$ were not a submersion; but $p$ is a covering map and hence is a submersion.  It follows that $[\Sigma_0]$ lies in $\bar \C$ (Theorem~\ref{T:thurston}(4)).  The linear independence is proven in the same way as in Lemma~\ref{lemma:boundary}.
\end{proof}

\p{Bounds on entropy in a fibered cone} The next lemma will make use of the following theorem of Fried \cite{Fr0,Fr}.

\begin{theorem}[Fried]
\label{theorem:fried}
Let $\C$ be a fibered cone for a fibered 3-manifold.  There is a strictly convex continuous function $\h \colon \C \to \R$ with the following properties:
\begin{enumerate}
\item for all $t > 0$ and $u \in \C$, we have $\h(tu) = \frac 1t \h(u)$;
\item for every primitive integral $u \in \C \cap H_2(M)$, the entropy of the associated monodromy is $\h(u)$; and 
\item as $u \to \partial \C$ within $\C$, we have $\h(u) \to \infty$.
\end{enumerate}
\end{theorem}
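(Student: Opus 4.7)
My plan is to realize every fibration in $\C$ as a Poincar\'e cross-section of a single flow on $M$, and to read off the entropy of the monodromies as a dynamical invariant of that flow.

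Fix a primitive integral $u_0 \in \C$ corresponding to a fibration with pseudo-Anosov monodromy $f_0$, and let $\phi_t$ be the resulting suspension flow on $M$, equipped with its Bowen--Margulis measure $\mu$. The main topological step, which I expect to be the hardest part, is Fried's theorem that every primitive integral $u \in \C$ is Poincar\'e-dual to a fiber which, after isotopy, is a cross-section of $\phi_t$; equivalently, $\C$ coincides with the cone of cohomology classes dual to cross-sections of the single flow $\phi_t$. The proof requires a careful transversality argument together with Theorem~\ref{T:thurston}(4).

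With this in hand, for each primitive integral $u \in \C$ the monodromy $f_u$ is conjugate to the first-return map $F_u\colon \Sigma_u \to \Sigma_u$, so Abramov's formula yields
\[
h(f_u) \;=\; h_{\mathrm{top}}(\phi_1) \cdot \bar\tau_u,
\]
where $\bar\tau_u$ is the average return time on $\Sigma_u$ with respect to the probability measure induced from $\mu$. I set $\h(u) := h_{\mathrm{top}}(\phi_1) \cdot \bar\tau_u$ on primitive integral $u$, and extend continuously to $\C$. Property (2) is then automatic. Property (1) follows from the scaling of return times under $u \mapsto tu$ for positive integer $t$: the fiber of the fibration at $tu$ is $t$ disjoint copies of $\Sigma_u$, and consecutive-copy return times are reduced by a factor of $t$, giving $\bar\tau_{tu} = \bar\tau_u/t$; this extends by homogeneity to all real $t > 0$.

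Property (3) follows because as $u$ approaches a boundary ray of $\C$, the dynamical cross-section degenerates---its topological complexity blows up or its return time becomes unbounded---forcing $\bar\tau_u \to \infty$. Strict convexity is the most subtle assertion, and I would deduce it from McMullen's Teichm\"uller polynomial, which expresses $e^{\h(u)}$ as the largest real root of a fixed polynomial $\Theta_M$ specialized at $u$; strict convexity of $\h$ then follows from the strict log-convexity of the dominant Perron--Frobenius root as the coefficients of the specialization vary analytically in $u$.
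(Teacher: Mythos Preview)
The paper does not prove this statement: it is quoted as a theorem of Fried with references \cite{Fr0,Fr}, and McMullen \cite{Mc} is cited for another proof and finer properties. There is therefore no proof in the paper to compare your proposal against.

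As an outline of Fried's actual argument your sketch is broadly on target: the key input is indeed that all fibrations in $\C$ arise as cross-sections of a single flow (Fried's cross-section theorem), after which Abramov's formula expresses the monodromy entropy as (flow entropy) $\times$ (mean return time), and the homogeneity (1) follows. Two places deserve more care. First, your argument for (3) is vague: ``topological complexity blows up or return time becomes unbounded'' is not a proof. The correct mechanism is that $u \in \partial\C$ means $\langle u,[\gamma]\rangle = 0$ for some homology direction $[\gamma]$ of the flow; for a periodic orbit in that direction the number of intersections with a cross-section dual to $u$ tends to zero, forcing the return time along that orbit, and hence $\h(u)$, to infinity---but making this rigorous for the \emph{average} return time against the Bowen--Margulis measure requires an argument (equidistribution of periodic orbits, or Fried's characterization of $\C$ via the asymptotic cycle). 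Second, your derivation of strict convexity via McMullen's Teichm\"uller polynomial is anachronistic relative to Fried's original proof, and ``strict log-convexity of the dominant Perron--Frobenius root as the coefficients vary analytically'' is not a general fact---it is a specific feature of the way the specialization depends on $u$, and needs to be argued. Fried's own route to convexity goes through the concavity of $1/\h$ (equivalently, the reciprocal of entropy is linear along certain rays determined by the flow's invariant measures), which you do not mention.
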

See also \cite{Mc} for another proof and for finer properties of the function $\h$.

We will also make use of the normalized entropy function
\[ \bar \h (x) = \|x\| \h(x). \]
By the first property of $\h$ in Theorem~\ref{theorem:fried}, the function $\bar \h$ is constant on rays through 0.  Therefore, as $\h$ is strictly convex, the function $\bar \h$ is convex.

\begin{lemma}
\label{lemma:convex}
Let $\C$ be a fibered cone for a mapping torus $M$.  If $u \in \C$ and $v \in \bar \C$, then $\h(u+v) < \h(u)$.  
\end{lemma}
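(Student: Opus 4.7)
The plan is to study the one-variable function $\psi(t) := \h(u+tv)$ on $[0,\infty)$ and show $\psi(1) < \psi(0)$, which is precisely the claimed inequality. First observe that $u + tv \in \C$ for every $t \geq 0$: since $\C$ is open and equals the interior of $\bar\C$, and since $u \in \C$ has an open neighborhood inside $\C$, the translation by $tv \in \bar\C$ yields $u + tv \in \C + \bar\C \subseteq \C$. Hence $\psi$ is well-defined on $[0,\infty)$, and by Theorem~\ref{theorem:fried} it is strictly convex there.

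I would first handle the interior case $v \in \C$. Using the homogeneity $\h(sw) = \h(w)/s$, rewrite $\psi(t) = (1/t)\,\h(u/t + v)$; since $u/t + v \to v \in \C$ and $\h$ is continuous on $\C$, we get $\psi(t) \to 0$ as $t \to \infty$. A positive, strictly convex function on $[0,\infty)$ tending to $0$ at infinity is strictly decreasing throughout, because strict convexity would propagate any local increase into unbounded linear growth, contradicting $\psi \to 0$. Hence $\psi(1) < \psi(0)$. The subcase where $v$ is a positive multiple of $u$ is settled directly by $\h((1+c)u) = \h(u)/(1+c) < \h(u)$.

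The delicate case, and the main obstacle, is $v \in \partial\C \setminus \{0\}$: here $\psi$ need not tend to $0$ at infinity, so the argument above does not apply. To handle this, I would invoke McMullen's refinement of Fried's theorem~\cite{Mc}, which shows that $g := 1/\h$ extends continuously to a concave, positively homogeneous (of degree one) function on $\bar\C$ that vanishes on $\partial\C$ and is strictly concave in directions transverse to rays through the origin. Concavity combined with degree-one homogeneity yields super-additivity $g(x+y) \geq g(x) + g(y)$ on $\bar\C$ (via $g(x+y) = 2 g((x+y)/2) \geq g(x)+g(y)$), and strict transverse concavity gives strict super-additivity whenever $x$ and $y$ are not proportional. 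Applying this with $x = u \in \C$ and $y = v \in \partial\C$ — which are automatically non-proportional, since $u$ is interior and $v$ lies on the boundary — and using $g(v) = 0$, one concludes $g(u+v) > g(u)$, i.e., $\h(u+v) < \h(u)$. The crux is thus the boundary case, where it is essential to go beyond Fried's theorem and exploit the structural concavity of $1/\h$ provided by McMullen's finer analysis.
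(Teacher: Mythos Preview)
Your argument is correct and takes a genuinely different route from the paper's. The paper works geometrically in the plane $P$ spanned by $u$ and $v$: the sublevel set $\h^{-1}((0,\h(u)])$ is convex, its bounding level curve in $P$ is properly embedded and asymptotic to the two boundary rays of $\bar\C\cap P$, and therefore the translated sector $u+(\bar\C\cap P)$ lies inside the sublevel set. You instead argue analytically and split into cases. For $v\in\C$ your observation that $\psi(t)=\h(u+tv)$ is positive, strictly convex, and tends to $0$ is a clean, self-contained application of Fried's theorem alone; for $v\in\partial\C\setminus\{0\}$ you switch to $g=1/\h$ and use McMullen's concavity plus degree-one homogeneity to get super-additivity $g(u+v)\ge g(u)+g(v)=g(u)$. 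Both proofs ultimately rest on Theorem~\ref{theorem:fried} and on~\cite{Mc}; the paper's level-set picture handles all $v\in\bar\C$ uniformly, while yours has the virtue of isolating the interior case as completely elementary and making explicit exactly which extra input (concavity of $1/\h$) the boundary case consumes.

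One sentence deserves justification: your claim of \emph{strict} super-additivity when $v\in\partial\C$ invokes strict concavity of $g$ along $[u,v]$, but McMullen's strict concavity is stated on the interior. This is easily repaired: if $g((u+v)/2)=\tfrac12(g(u)+g(v))$, concavity forces $g$ to be affine on all of $[u,v]$, hence on the sub-segment $[u,(u+v)/2]\subset\C$, contradicting interior strict concavity transverse to rays. Alternatively, once ordinary super-additivity gives $\psi(t)\le\psi(0)$ for every $t\ge 0$, strict convexity of $\psi$ alone rules out $\psi(1)=\psi(0)$, since that equality would force $\psi(t)>\psi(0)$ for all $t>1$. (Both the lemma and your proof tacitly require $v\ne 0$.)
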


\begin{proof}

Set $h_0 = \h(u)$.  By properties (1) and (2) of $\h$ in Theorem~\ref{theorem:fried}, the region $\h^{-1}((0,h_0])$ is convex.  The point $u$ is contained in the boundary, the level hypersurface $\h^{-1}(h_0)$.  By properties (1) and (3) of $\h$ in Theorem~\ref{theorem:fried}, $\h^{-1}(h_0)$ is properly embedded and asymptotic to $\partial \C$; see also \cite{Mc}.  

The intersection of $\h^{-1}((0,h_0])$ with the plane $P \subset H^1(M)$ spanned by $u$ and $v$ is a convex region in $P$ bounded by $\h^{-1}(h_0) \cap P$, which we think of as a parameterized curve.  By the previous paragraph, as the parameter tends to $\pm \infty$ the slopes limit to the slopes of the two rays of $\partial \bar {\C} \cap P$:

\bigskip

\centerline{\includegraphics[scale=.5]{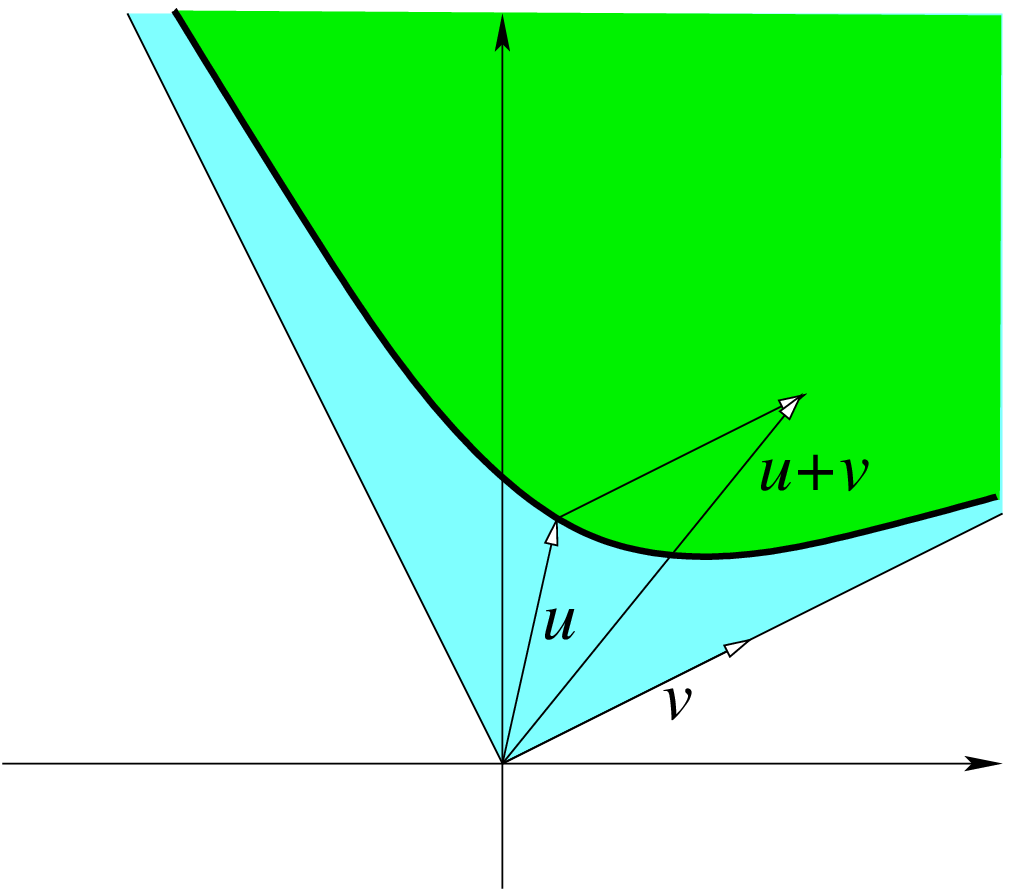}}

\noindent It follows that $u + \bar {\C} \cap P$ (and hence $u+v$) is contained in $\h^{-1}((0,h_0])$.
\end{proof}

\begin{proof}[Proof of Proposition~\ref{P:homology_bound_gen 2}]

For each choice of field $\F$ and each choice of $g$ and $k$ with $g \geq k/2$, we need to construct $f_{g,k} \in \Mod(S_g)$ with $\kappa(f_{g,k};\F) \geq k$ and  
\[h(f_{g,k})  \leq \twoentropy \left(\frac{k + 1}{2g-2}\right ) .\]
Since $\kappa(f) \leq \kappa_\F(f)$ for any field $\F$, it suffices to consider the case $\F = \R$.  

We can dispense with the case $k=0$:  we already stated in the introduction that $L(g,0) < \log(\varphi^4)/(2g-2)$ which is to say that for $g \geq 2$, there is an $f_{g,0} \in \Mod(S_g)$ with $h(f_{g,0}) < \log(\varphi^4)/(2g-2) < \twoentropy/(2g-2)$.  

For $k \geq 2$ even, we will prove something stronger than what is required: we will show that there is an $f_{g,k} \in \Mod(S_g)$ with $\kappa(f_{g,k}) = k$ and
\[h(f_{g,k})  \leq \twoentropy \left(\frac{k}{2g-2}\right ) .\]
The case of $k$ odd follows immediately from this.  Indeed, for $k$ odd, we can then set $f_{g,k} = f_{g,k+1}$.  For then $\kappa(f_{g,k}) = k+1 \geq k$ and $h(f_{g,k}) < \twoentropy\,(k+1)/(2g-2)$. Thus, for the remainder of the proof, we take $k$ to be even.

Let $g_0=k/2$ if $k \geq 4$ and let $g_0=2$ if $k=2$.  Also let $f = f_{g_0}$ if $k \geq 4$ and let $f=f_2'$ if $k=2$.  Let $M$, $\Sigma$, and $\C$ be the associated mapping torus, fiber, and fibered cone.

We can firstly set $f_{g_0,k} = f$, as by Lemma~\ref{lemma:dil} we have
\[ (2g_0-2) h(f) < 2g_0 \cdot \entropy \leq  \twoentropy \cdot k.\]
Assume now that $g > g_0$.  If $g_0 \geq 3$, let $\Sigma_0$ and $\Sigma_1$ be the surfaces in $M$ promised by Lemma~\ref{lemma:boundary}, and if $g_0=2$, let $\Sigma_0$ be the surface promised by Lemma~\ref{lemma:boundary2}; for convenience we set $[\Sigma_1]=0$ in the latter case.  It follows from Lemmas~\ref{lemma:dil} and~\ref{lemma:convex} that $\h([\Sigma])$, $\h([\Sigma] + g_0 \cdot [\Sigma_0])$, and $\h([\Sigma] + [\Sigma_1])$ are all bounded above by $6 \log(2)$.  

We would like to find a primitive integral class $x \in \C$ that has $\|x\| = 2g-2$ and lies in the cone on the convex hull of $[\Sigma]$, $[\Sigma] + g_0 \cdot [\Sigma_0]$, and $[\Sigma] + [\Sigma_1]$.  To do this, we write
\[ 2g-2 = \ell(2g_0-2) + 2r \]
where $\ell > 0$ and $0 < r < g_0$.  If $r$ and $\ell$ are relatively prime we take 
\begin{align*} 
x &= \ell [\Sigma]  + r [\Sigma_0] \\ 
&= \left(\ell - \frac{r}{g_0}\right)[\Sigma] + \left(\frac{r}{g_0}\right)\left( [\Sigma] + g_0 \cdot [\Sigma_0]   \right)
\end{align*}
and if $r$ and $\ell$ are not relatively prime then $r,\ell \geq 2$ and we take
\begin{align*}
x &= \ell [\Sigma]  + (r-1) [\Sigma_0] + [\Sigma_1] \\
& = \left(\ell - 1 - \frac{r-1}{g_0}\right)[\Sigma] + \left(\frac{r-1}{g_0}\right)(    [\Sigma]  + g_0 \cdot [\Sigma_0]) + ([\Sigma] + [\Sigma_1]).
\end{align*}
Note that in the latter case $r \geq 2$ implies $g_0 \geq 3$, and so $[\Sigma_1]\neq 0$.  
By Theorem~\ref{T:thurston}(2) we have $\|x\| = 2g-2$ in either case.  The class $x$ is primitive because $[\Sigma]$ and $[\Sigma_0]$, and $[\Sigma_1]$ are primitive and linearly independent when $g_0 \geq 3$ (Lemma~\ref{lemma:boundary}) and  $[\Sigma]$ and $[\Sigma_0]$ are primitive and linearly independent for $g_0=2$ (Lemma~\ref{lemma:boundary2}).

By Theorem~\ref{T:thurston} the class $x$ is represented by a connected fiber for some fibration of $M$; denote the monodromy by $f_{g,k}$.  Since $\|x\|=2g-2$ the genus of this fiber is $g$, so $f_{g,k} \in \Mod(S_g)$.  Since $b_1(M)=k+1$ (Lemma~\ref{lemma:kappa}) we have $\kappa(f_{g,k}) = k$.  

It remains to bound the entropy $h(f_{g,k})$.  Since $\bar \h$ is convex and constant on rays, $\bar\h(x)$ is bounded above by the values of $\bar\h$ on $[\Sigma]$,  $[\Sigma] + g_0\cdot[\Sigma_0]$, and $[\Sigma] + [\Sigma_1]$.  Applying this fact,  Lemma~\ref{lemma:convex}, and Theorem~\ref{T:thurston}(2), we have:
\begin{align*} 
(2g-2) h(f_{g,k}) = \bar \h(x)  & \leq \max \{ \bar\h([\Sigma]) , \bar\h([\Sigma] + g_0 \cdot [\Sigma_0]),\bar\h([\Sigma] + [\Sigma_1]) \} \\
& \leq \max\{ \| [\Sigma] \|,\| [\Sigma] + g_0 \cdot [\Sigma_0] \|,\| [\Sigma] + [\Sigma_1] \| \} \\&   \qquad \cdot  \max\{ \h([\Sigma]), \h([\Sigma] + g_0 \cdot [\Sigma_0]),\h([\Sigma] + [\Sigma_1]) \} \\
& \leq ((2g_0-2) +2g_0) \cdot h(f) \\
& < 4g_0 \cdot h(f).
\end{align*}
When $k \geq 4$, we have $g_0=k/2$ and $h(f) < \entropy$ (Lemma~\ref{lemma:dil}) and so $h(f_{g,k}) \leq \twoentropy\cdot k/(2g-2)$, as required.  And when $k=2$ we have $g_0=k$ and $h(f) < 2$ (Lemma~\ref{lemma:dil}) and so $h(f_{g,k}) < (4k \cdot 2)/(2g-2) < 12 \log(2) \cdot (3/2g-2)$, as required.
\end{proof}

\section{Counting conjugacy classes}
\label{sec:count}

In this section we prove Theorem~\ref{T:homology_count}, which states that for all $k \geq 0$ there exists constants $c_1,c_2 >0$ so that the number of conjugacy classes of pseudo-Anosov $f \in \Mod(S_g)$ with $\kappa(f) = k$ and $h(f) < \entropy(k+1)/(2g-2)$ is at least $c_1 g^k - c_2$.
The proof is almost identical to the proof of the analogous theorem of the last two authors in the case where there is no restriction on $\kappa(f)$ \cite[Theorem 1.3]{LM}, and so we will refer to the proof in that paper for some of the details.

\begin{proof}[Proof of Theorem \ref{T:homology_count}]

The theorem is vacuously true for $k =0$, so we assume $k  > 0$.  First suppose $k \geq 2$ is even.  If $k \geq 4$, then set $g_0 = k/2$ and let $f = f_{g_0} \colon S_{g_0} \to S_{g_0}$ be the pseudo-Anosov mapping class constructed in Section~\ref{S:examples}.  If $k = 2$, then set $g_0=2$ and let $f = f_2' \colon S_{g_0} \to S_{g_0}$.  According to Lemmas~\ref{lemma:dil} and \ref{lemma:kappa}, $h(f) < \entropy$ and $\kappa(f) = k$.

Let $M$, $\Sigma$, and $\C$ be the mapping torus, fiber, and fibered cone corresponding to $f$, and let $\h \colon \C \to \R$ be the function from Theorem~\ref{theorem:fried}.  By Theorem~\ref{T:thurston}, the restriction of the Thurston norm to $\C$ is the restriction of an integral linear functional $L$ on $H_2(M)$.

Choose any compact neighborhood $K \subset L^{-1}(1) \cap \C$ of $[\Sigma]/\|[\Sigma]\|$ on which the function $\bar\h(x) = \|x\| \h(x)$ is bounded by $6 \log(2) \cdot (2g_0-2) < 6\log(2) \cdot (k+1)$.  Since $\bar\h$ is invariant under scale such a $K$ exists and for the same reason  $\bar\h$ is bounded by $\entropy(k+1)$ on $\R_+ \cdot K$, the cone over $K$.

For every $g \geq g_0$ we set
\[ 
\Omega_{g,k} = \{ [\Sigma] \in (2g-2) \cdot K \mid [\Sigma] \mbox{ is primitive integral}  \}. 
\]
Each element $[\Sigma]$ of $\Omega_{g,k}$ is represented by a fiber of $M$ of genus $g$ for which the monodromy $f_\Sigma \colon \Sigma \to \Sigma$ satisfies $h(f_\Sigma) \leq \entropy \cdot (k+1)/(2g-2)$, as in the statement of the theorem.  We would like to estimate from below the size of $\Omega_{g,k}$.  Since $L$ is integral, we can write $H_2(M;\Z) \cong \Z^k \oplus \Z$, where the first summand is the kernel of the restriction of $L$.  This decomposition extends to a decomposition of $H_2(M)$ as $\R^k \oplus \R$.  By Lemmas~\ref{lemma:boundary} and~\ref{lemma:boundary2} we have $L(H_2(M;\Z)) = 2\Z$.

Let $K_0$ denote the image in $\R^k$ of $K$ under orthogonal projection.  By the previous paragraph, we can identify the integral points of $(2g-2) \cdot K$ with $\Z^k \cap (2g-2) \cdot K_0$ (where the scaling is done in $\R^k$).  As the volume of $(2g-2) \cdot K_0$ grows like $g^k$, it follows that the number of integral points in $(2g-2) \cdot K$ is at least $c_1''g^k - c_2''$ for some $c_1'',c_2'' > 0$ (we must subtract $c_2''$ because, for instance, the set is empty for small values of $g$).   Then by noting that the number of primitive elements make up a definite fraction of the integral elements, we can deduce that for all $g \geq 2$, $|\Omega_{g,k}|$ is bounded below by $c_1'g^k - c_2'$ for some $c_1',c_2' > 0$; see \cite{LM} for the details.

If two points of $\Omega_{g,k}$ have monodromies that are conjugate in $\Mod(S_g)$, then there is a homeomorphism of $M$ taking one fiber to the other.  By Mostow rigidity, such a homeomorphism is homotopic to an isometry of $M$ with respect to its hyperbolic metric.  It follows that the number of conjugacy classes in $\Mod(S_g)$ represented by elements of $\Omega_{g,k}$ is at least $(c_1'g^k - c_2')/N$, where $N$ is the order of the isometry group of $M$.  Setting $c_1 = c_1'/N$ and $c_2 = c_2'/N$ completes the proof in the case of $k$ even.

Now suppose that $k \geq 5$ is odd.  The argument is almost the same as the case when $k$ is even.  Let $g_0 = (k+1)/2$.  Let $f'$ be the mapping class defined in the same way as $f_{g_0}$ except that we leave out one nonseparating curve from the construction, namely, any nonseparating curve disjoint from $\delta_0$.

By calculations similar to those made in Lemmas~\ref{lemma:dil} and~\ref{lemma:kappa} the mapping class $f'$ is pseudo-Anosov with $\kappa(f') = k$ and with 
\[
h(f') \leq h(f) < \entropy (2g_0-2) < \entropy (k+1).
\]
The proof of Lemma~\ref{lemma:boundary} applies and so there is a surface of genus two in the boundary of the fibered cone corresponding to $f'$.  We may apply the argument given above for the case of $k$ even, with $f$ replaced by $f'$.  

\medskip

For the cases of $k=3$ and $k=1$, we need slightly different examples from the ones previously given in $\Mod(S_2)$, namely, the ones given by the product of two positive multitwists about the following pairs of multicurves:

\vspace*{2ex}

\labellist
\small\hair 2pt
\endlabellist
\centerline{\includegraphics[scale=1]{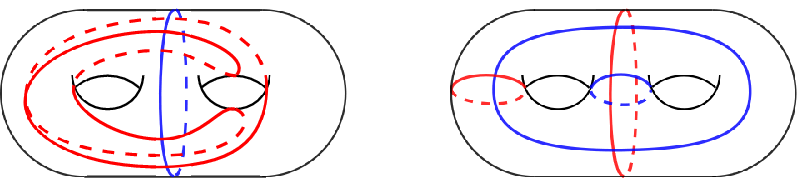}}

\vspace*{2ex}

For the pair of multicurves on the left-hand side we can compute $h(T_AT_B) < \log(34)$ and $\kappa(T_AT_B) = 3$, and for the pair on the right-hand side we can compute $h(T_AT_B) < \log(9)$ and $\kappa(T_AT_B) = 1$.   As the proof of Lemma~\ref{lemma:boundary2} applies in both cases, we can repeat the argument from above to complete the proof.
\end{proof}

As promised in the introduction, we now explain how to promote our Main Theorem to show that the minimal entropy of a pseudo-Anosov element of $\Mod(S_g)$ fixing a subspace of $H_1(S_g;\F)$ of dimension exactly equal to $k$ is comparable to $(k+1)/g$.  For the lower bound there is nothing to do.  For the upper bound and $k>0$ even the examples given in the proof of Proposition~\ref{P:homology_bound_gen 2} have $\kappa_\F = k$, and so there is again nothing to do.   For every odd $k$, we must construct a mapping class as in that proof, except with $\kappa_\F$ equal to $k$ instead of $k+1$.  To do this we simply replace the $f_{g_0}$ in the proof of Proposition~\ref{P:homology_bound_gen 2} with their counterparts described in the proof of Theorem~\ref{T:homology_count}.  The entropies of the latter are bounded above by $\entropy$ since they are obtained by forgetting Dehn twists from the $f_{g_0}$, and they have $\kappa_\F = k$.  

For $k=0$, the techniques of this paper do not apply, since the mapping torus associated to $f$ with $\kappa(f) = 0$ has first betti number equal to 1, and so there are no fibered cones.  However, one can show Penner's examples \cite[p. 448]{Pe} giving the required upper bound have $\kappa_\F=0$ for all $g$ and $\F$.

\appendix

\section{An upper bound for $L(0,g)$} \label{S:limsup to sup}

In this appendix, we prove the following proposition.

\begin{proposition} \label{P:limsup to sup}
For all $g \geq 2$ we have
\[ (2g-2) L(0,g) < \log \left( \varphi^4 \right).\]
\end{proposition}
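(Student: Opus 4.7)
The plan is to realize Hironaka's minimal-entropy family $\{f_g\}$ as primitive integral classes in a single fibered cone of one hyperbolic $3$-manifold $M$, then use the strict convexity of the Fried--McMullen normalized entropy function to upgrade the asymptotic bound $\limsup_{g\to\infty}\, g\cdot L(0,g) \leq \log(\varphi^2)$ into a strict inequality for every $g$. Concretely, Hironaka \cite{Hir} constructs the $f_g$ as monodromies of fibrations of a single cusped hyperbolic $3$-manifold $M$, realized by primitive integral classes $x_g$ in a fibered cone $\C \subset H_2(M;\R)$ with $\|x_g\| = 2g-2$. Fried's Theorem~\ref{theorem:fried} supplies a continuous $\h \colon \C \to \R$ with $\h(x_g) = h(f_g)$, and the normalized entropy $\bar{\h}(u) = \|u\|\h(u)$ is constant on rays. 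Since $L(0,g) \leq h(f_g)$, it suffices to show $\bar{\h}(x_g) < \log(\varphi^4)$ for every $g$ in Hironaka's range.

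The main step is to study $\bar{\h}$ along the arc in the cross section $K = \{u \in \C : \|u\| = 1\}$ traced out by the normalized classes $y_g := x_g/(2g-2)$. These $y_g$ lie on the intersection of $K$ with a $2$-plane in $H_2(M;\R)$ (inherited from Hironaka's parametrization) and converge to a boundary point $y_\infty \in \partial \C$. By McMullen \cite{Mc}, $\bar{\h}$ is real-analytic and strictly convex on $\C$, so its restriction $\phi(t)$ to a real parameter $t$ along this arc is strictly convex on an interval $(t_0,t_\infty)$. The $\limsup$ hypothesis gives
\[ \bar{\h}(y_g) = (2g-2)\,h(f_g) \to \log(\varphi^4) \quad \text{as } g\to\infty, \]
which identifies $\lim_{t\to t_\infty} \phi(t) = \log(\varphi^4)$ via continuity of $\bar{\h}$ up to this boundary ray. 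Strict convexity (together with the fact that $\phi$ cannot exceed its limit without losing convexity on an unbounded interval) then forces $\phi(t) < \log(\varphi^4)$ for every interior $t$, and in particular $\bar{\h}(x_g) < \log(\varphi^4)$ for every $g$ in Hironaka's range.

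Finitely many small values of $g$ (those outside Hironaka's explicit range) can be handled by exhibiting known pseudo-Anosov examples in $\Mod(S_g)$ with normalized entropy less than $\log(\varphi^4) \approx 1.925$, a comfortable target for explicit constructions in small genus. The main obstacle is the middle step: showing that the continuous extension of $\bar{\h}$ to $y_\infty$ realizes exactly the value $\log(\varphi^4)$, so that the strict convexity of $\phi$ produces a strict bound rather than a non-strict one. This is precisely the \emph{elementary calculus} phrase in the paper's preceding discussion: reducing the global statement to the strict monotonicity of a single real-variable convex function with a prescribed limit at its endpoint.
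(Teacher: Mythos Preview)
Your argument has a genuine gap stemming from a misreading of Hironaka's construction. The fibers in Hironaka's manifold $M$ are \emph{punctured} surfaces $S_{g,4}$, so the Thurston norm of the class $\alpha_g$ is $-\chi(S_{g,4}) = 2g+2$, not $2g-2$. The closed-surface maps $\psi_g$ are obtained only after filling punctures, which preserves entropy but changes the Euler characteristic. Consequently $(2g-2)h(\psi_g) = \tfrac{g-1}{g+1}\,\bar\h(\alpha_g)$, not $\bar\h(\alpha_g)$ itself. Moreover, the normalized classes $\alpha_g/\|\alpha_g\|$ converge to the \emph{interior} ray through $(0,1)$, which by Theorem~\ref{theorem:eko 1}(4) is where $\bar\h$ achieves its global minimum $\log(\varphi^4)$---not to a boundary point of $\C$. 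Strict convexity therefore gives $\bar\h(\alpha_g) > \log(\varphi^4)$ for every $g$, the \emph{opposite} inequality to the one you claimed.

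What must actually be shown is that the factor $\tfrac{g-1}{g+1} < 1$ more than compensates for the excess $\bar\h(\alpha_g) - \log(\varphi^4) > 0$. This is a genuine competition between two effects, and convexity alone does not resolve it. The paper's proof encodes this balance in the auxiliary function $F(t) = (1-\tfrac{2}{3}t)\h_1(t)$ (where $t = 3/(g+1)$), computes $F(0) = \log(\varphi^4)$, and then proves $F'(t) < 0$ on $(0,3/7]$ via an explicit numerical estimate using the mean value theorem and the convexity of $\h_1$. This derivative computation---not a pure convexity argument---is the ``elementary calculus'' referred to in the introduction, and it is the step your proposal is missing.
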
\

The proposition will be proved by exhibiting explicit mapping classes $\psi_g \in \Mod(S_g)$ with $h(\psi_g) < \log \left( \varphi^4\right)/(2g-2)$.  It is known that $L(0,2)$ is the logarithm of the largest root of the polynomial $x^4-x^3-x^2-x+1$, and so $L(0,2) \approx .543533$; see \cite{ChoHam,Zhirov}.  Therefore, it suffices to consider $g \geq 3$.  

Consider the link given by the union of the braid closure of the 3-strand braid $\sigma_1\sigma_2^{-1}$, together with the braid axis and let $M$ be the complement in $S^3$.  Since $\sigma_1\sigma_2^{-1}$ is pseudo-Anosov, $M$ is hyperbolic.  We have the following theorem of Hironaka \cite{Hir}.

\begin{theorem}[Hironaka]
\label{theorem:eko 1}
There are coordinates $(a,b)$ on $H^1(M)$ with the following properties:
\begin{enumerate}
 \item $\|(a,b)\|=2\|(a,b)\|_\infty$
 \item $\C = \{(a,b) \mid b > 0 \mbox{ and } -b < a < b \}$ is a fibered cone
 \item For primitive integral $(a,b) \in \C$, $\h(a,b)$ is the logarithm of the largest root of the polynomial
 \[ L_{a,b}(x) = x^{2b} - x^{b+a} - x^b - x^{b-a} + 1\]
 \item In $\C$, $\bar\h$ attains its minimum on the ray through $(0,1)$, and
 \item for $g \geq 3$ and 
\[ 
\alpha_g = 
\begin{cases}
(1,g+1) & g \equiv 2,5 \mod 6 \\
(3,g+1) & g \equiv 0,1,3,4 \mod 6,  
\end{cases}
\]
the class $\alpha_g \in H^1(M)$ is represented by a surface $S_{g,4}$ of genus $g$ with 4 punctures; the associated monodromy $\psi_g^\circ \colon S_{g,4} \to S_{g,4}$ is pseudo-Anosov and the induced homeomorphism $\psi_g \colon S_g \to S_g$ obtained by filling in punctures is pseudo-Anosov with $h(\psi_g) = h(\psi_g^\circ)$.
\end{enumerate}
\end{theorem}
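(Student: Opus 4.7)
The proof will proceed by identifying $M$ as a 2-cusped hyperbolic link complement, computing its Thurston norm and the relevant fibered cone, and then applying McMullen's Teichm\"uller polynomial machinery to extract the entropy formula in (3), from which (4) and (5) follow.

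Since the permutation underlying $\sigma_1\sigma_2^{-1}$ is the 3-cycle $(1\,2\,3)$, the braid closure is a knot, so the link has two components and $H_1(M;\Z)\cong\Z^2$. I would fix coordinates $(a,b)$ on $H^1(M;\R)$ dual to meridians of the axis and of the knot, normalized so that the braid fibration---whose fiber is the three-punctured disk $D_3$ and whose monodromy is $\sigma_1\sigma_2^{-1}$---represents $(0,1)$, and so that an involution of the link complement interchanging the two outer strands of the braid realizes $(a,b) \mapsto (-a,b)$. For (1), the plan is to compute the multi-variable Alexander polynomial $\Delta$ of the link directly from a Wirtinger presentation or from the Burau representation of the braid, then apply McMullen's theorem (Alexander norm $\leq$ Thurston norm, with equality on fibered cones up to boundary corrections) to identify $\|(a,b)\|$ with $2\|(a,b)\|_\infty$; explicit fibers in each chamber give the matching upper bound. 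Property (2) then follows: the fibered cone containing $(0,1)$ is the maximal open cone on which the norm is $2b$, namely $\{b>0,\ -b<a<b\}$.

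For (3), I would construct an invariant train track $\tau$ on $D_3$ carrying the unstable lamination of $\sigma_1\sigma_2^{-1}$, lift its transition matrix to a matrix over $\Z[H_1(M;\Z)]=\Z[t^{\pm 1},u^{\pm 1}]$ using McMullen's algorithm, and take the characteristic polynomial $\Theta$. Specializing at an integral class $(a,b)\in\C$ and clearing denominators should give $L_{a,b}(x)$ up to a unit, whereupon McMullen's theorem identifies $\h(a,b)$ with $\log$ of its largest root. For (4), set $\lambda=\lambda_{a,b}$ and differentiate $L_{a,b}(\lambda)=0$ implicitly; combined with the $a\mapsto -a$ symmetry, this shows any critical point of $\bar\h(a,b)=2b\log\lambda$ along $\{b=1\}\cap\C$ satisfies $a=0$, and strict convexity of $\bar\h$ from Theorem~\ref{theorem:fried} upgrades this to a unique minimum on the ray through $(0,1)$.

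For (5), the congruence conditions on $g$ are exactly those ensuring $\gcd(a,g+1)=1$ for $a\in\{1,3\}$, so $\alpha_g$ is primitive integral in $\C$ and is represented by a connected fiber $\Sigma$ with $-\chi(\Sigma)=\|\alpha_g\|=2(g+1)$; an intersection count of $\alpha_g$ with the two peripheral tori produces four boundary circles, so $\Sigma\cong S_{g,4}$. The monodromy $\psi_g^\circ$ is pseudo-Anosov by Thurston's theorem, and the invariant foliation has at least two prongs at each puncture---this must be checked by reading the prong data off $\tau$---so filling in the punctures yields a well-defined pseudo-Anosov $\psi_g\in\Mod(S_g)$ with $h(\psi_g)=h(\psi_g^\circ)$. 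The main obstacle is the explicit computation in step (3): writing down $\tau$ together with its $H_1$-twisted transition matrix and verifying that the resulting Teichm\"uller polynomial specializes cleanly to exactly $L_{a,b}$, with correct signs and exponents. Everything else---the Alexander polynomial calculation for (1), the calculus argument for (4), and the Euler characteristic and boundary count for (5)---is then either routine or automatic.
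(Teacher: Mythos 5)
This statement is not proved in the paper at all: it is quoted verbatim from Hironaka \cite{Hir}, so there is no internal argument to compare against. Your outline is essentially the route taken in that source: identify $M$ as a two-cusped hyperbolic link complement, pin down the Thurston norm and the fibered cone via the multivariable Alexander polynomial and McMullen's Alexander-norm inequality together with explicit fibers, compute the Teichm\"uller polynomial from an invariant train track for $\sigma_1\sigma_2^{-1}$ and specialize it to obtain $L_{a,b}$, and then deduce (4) and (5). Three caveats on the details. First, once (3) is in hand the symmetry $\h(a,b)=\h(-a,b)$ is immediate from $L_{a,b}=L_{-a,b}$, so the involution of the link is not needed; and note that $\bar\h$ itself is \emph{not} strictly convex (it is constant on rays), so the uniqueness argument should invoke strict convexity of $\h$, or of $\bar\h$ restricted to the slice $b=1$ of $\C$, where it equals $2\h$. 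Second, your parenthetical explanation of the congruence conditions mod $6$ is wrong as stated: $\gcd(1,g+1)=1$ always, so primitivity alone would let you take $a=1$ for every $g$; the real content of the case split is the boundary count, namely choosing $a\in\{1,3\}$ so that the fiber representing $(a,g+1)$ meets the two cusp tori in exactly four circles, whence $2g_{\mathrm{fiber}}-2+4=\|\alpha_g\|=2(g+1)$ forces the genus to be exactly $g$; with $a=1$ and the wrong residue one instead gets a fiber of the wrong genus with fewer punctures. This gcd computation on the peripheral tori is where the mod-$6$ conditions actually come from and must be carried out. Third, you are right to flag the prong data: without verifying that none of the four punctures is $1$-pronged, the assertion that filling in punctures yields a pseudo-Anosov map with the same entropy can fail, so that verification is a genuine step of the proof (it is handled in \cite{Hir} via the singularity data of the invariant foliations), not a routine afterthought.
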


Proposition \ref{P:limsup to sup} is a consequence of the following proposition.

\begin{proposition} \label{P:eko examples}
For all $g \geq 3$ we have
\[ (2g-2) h(\psi_g) < \log \left( \varphi^4 \right).\]
\end{proposition}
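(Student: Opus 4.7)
The plan is to reformulate the inequality in terms of the polynomial $L_{a_g,g+1}$ from Theorem~\ref{theorem:eko 1}(3), reduce to a single worst case, and then verify the reduced inequality by elementary calculus. By Theorem~\ref{theorem:eko 1}(3) we have $h(\psi_g) = \log \lambda_g$, where $\lambda_g > 1$ is the largest real root of $L_{a_g, g+1}(x) = x^{2g+2} - x^{g+1+a_g} - x^{g+1} - x^{g+1-a_g} + 1$. The inequality $(2g-2)h(\psi_g) < \log(\varphi^4)$ is therefore equivalent to $\lambda_g < \varphi^{2/(g-1)}$, and I would prove this by establishing that $\lambda_g$ is the unique root of $L_{a_g,g+1}$ greater than $1$ and then checking $L_{a_g, g+1}(\varphi^{2/(g-1)}) > 0$.

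For the uniqueness, I would divide by $x^{g+1}$ to rewrite $L_{a,b}(x)/x^b = (x^b + x^{-b}) - (x^a + x^{-a}) - 1$ and observe that for $x > 1$ each function $x^k + x^{-k}$ is a strictly increasing polynomial in $u = x + x^{-1} > 2$ (by induction using $D_{k+1}(u) = u D_k(u) - D_{k-1}(u)$), with the polynomial growing faster in $u$ when $k$ is larger. Together with $L_{a,b}(1)/1 = -1$, this forces a unique root in $(1,\infty)$. The same identity shows that for fixed $x>1$, $L_{a,b}(x)$ is strictly decreasing in $a \in [0,b)$, since $x^{b+a}+x^{b-a} = x^b(x^a + x^{-a})$ is increasing in $a$. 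Consequently, it suffices to prove $L_{3, g+1}(\varphi^{2/(g-1)}) > 0$ for every $g \geq 3$, which will cover both cases $a_g \in \{1,3\}$.

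Setting $r = 2/(g-1) \in (0, 1]$, dividing by $\varphi^{2(g+1)/(g-1)}$, and using $\varphi^2 + \varphi^{-2} = 3$, the inequality becomes
\[
H(r) := \varphi^{2+2r} + \varphi^{-(2+2r)} - \varphi^{3r} - \varphi^{-3r} - 1 > 0
\]
for $r \in (0, 1]$. Since $H(0) = 3 - 2 - 1 = 0$, it suffices to verify $H'(r) > 0$ on $[0, 1]$, which reduces to the hyperbolic inequality
\[
K(r) := 2\sinh\bigl((2+2r)\log\varphi\bigr) - 3\sinh(3r\log\varphi) > 0.
\]
The endpoint values are $K(0) = 2\sinh(2\log\varphi) = \sqrt{5}$ and $K(1) = 3\sqrt{5} - 6 > 0$. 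The main obstacle is ruling out a dip below zero in the interior: one has $K'(0) = -3\log\varphi < 0$, so $K$ is not monotone. I would handle this by showing that $K$ has a unique critical point $r^* \in (0,1)$ (obtained by analyzing when $4\cosh((2+2r)\log\varphi) = 9\cosh(3r\log\varphi)$, which has at most one solution in the relevant range because the ratio of the two sides is monotone), and then bounding $K(r^*)$ from below using the critical-point relation $\sinh((2+2r^*)\log\varphi)/\sinh(3r^*\log\varphi) = 27/8$ combined with $\cosh^2 - \sinh^2 = 1$. This gives the required strict positivity of $K$, hence of $H$, on $(0,1]$, completing the proof.
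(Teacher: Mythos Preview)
Your route is genuinely different from the paper's. The paper never touches the polynomial $L_{a,b}$ directly; instead it uses Fried's convexity of $\bar\h$ together with Theorem~\ref{theorem:eko 1}(4) (the minimum of $\bar\h$ lies on the ray through $(0,1)$) to bound $(2g-2)h(\psi_g)$ by a one-variable function $F(t)=(1-\tfrac23 t)\,\bar\h(t,1)$, shows $F'<0$ on $[0,3/7]$ via a difference-quotient estimate, and checks $g\in\{3,4,5\}$ by hand. Your approach trades the abstract convexity input for the explicit root formula, reducing everything to $H(r)>0$ on $(0,1]$; this is more elementary (no Fried theorem needed beyond the polynomial) and handles all $g\geq 3$ uniformly, at the cost of a longer calculus computation.

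The reduction is correct through the definition of $K$: the uniqueness of the root $>1$, the monotonicity in $a$ reducing to $a=3$, and the identities $H(0)=0$, $H'=2(\log\varphi)K$, $K(0)=\sqrt 5$, $K(1)=3\sqrt 5-6$, $K'(0)=-3\log\varphi$ all check out. However, your final step has two concrete errors. First, the ratio $\cosh((2+2r)\log\varphi)/\cosh(3r\log\varphi)$ is \emph{not} monotone on $[0,1]$: differentiating gives a sign governed by $5\sinh((2-r)\log\varphi)-\sinh((2+5r)\log\varphi)$, which is positive at $r=0$ and negative at $r=1$, so the ratio has an interior maximum. Second, the ``critical-point relation'' you quote, $\sinh/\sinh=27/8$, is the condition $K''=0$, not $K'=0$; the actual critical-point relation is $\cosh((2+2r^*)\log\varphi)/\cosh(3r^*\log\varphi)=9/4$.

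The good news is that your strategy is easily repaired, and in fact simplifies. One checks that the interior maximum of $\cosh((2+2r)\log\varphi)/\cosh(3r\log\varphi)$ on $[0,1]$ is strictly below $9/4$ (its endpoint values are $3/2$ and $7/(2\sqrt 5)\approx 1.565$, and a short estimate at the critical point keeps it below $2$). Hence $K'(r)<0$ on all of $[0,1]$, there is \emph{no} critical point, $K$ is strictly decreasing, and $K(r)\geq K(1)=3\sqrt 5-6>0$. This gives $H'>0$ on $[0,1]$ and $H(r)>H(0)=0$ on $(0,1]$, completing your argument. Alternatively, using the correct relation $\cosh A/\cosh B=9/4$ at a hypothetical critical point and $\sinh^2=\cosh^2-1$, one gets $K(r^*)^2 \geq 4(\tfrac{81}{16}c^2-1)-9(c^2-1)=\tfrac{45}{4}c^2+5>0$ for $c=\cosh B\geq 1$, which also suffices once uniqueness of the critical point is established properly.
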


\begin{proof}

We proceed in a series of claims.

\medskip

\emph{Claim 1.} $(2g-2) h(\psi_g) = \left(\frac{g-1}{g+1}\right) \bar\h(\alpha_g).$

\medskip

The claim follows by unraveling definitions and using Theorem~\ref{theorem:eko 1}.

\medskip

\emph{Claim 2.} $(2g-2) h(\psi_g) < \log(\varphi^4)$ for $3 \leq g \leq 5$.

\medskip

The claim is proven by explicit computation using Claim 1 and parts (1) and (3) of Theorem~\ref{theorem:eko 1}.  Specifically, the values of $(2g-2) h(\psi_g)$ for $g=3$, $4$, and $5$ are approximately 1.35, 1.40, and 1.45,
respectively.  As $\log(\varphi^4)$ is approximately 1.92, the claim follows.

\medskip

Before we continue, we define two functions on $(-1,1)$:
\begin{align*}
\h_1(t) & = \bar \h(t,1) \\
F(t) &= (1-\tfrac23t)\h_1(t)
\end{align*}
The function $\h_1$ is the restriction of $\bar\h$ to the points of $\C$ with Thurston norm equal to 2 by part (1) of Theorem~\ref{theorem:eko 1}.  This carries all of the information of $\bar\h$ since the latter is constant on rays.  The relevance of the function $F$ is explained by the following claim.

\medskip

\emph{Claim 3.} $(2g-2) h(\psi_g) \leq F(3/(g+1))$.

\medskip

By Claim 1, the convexity of $\bar \h$, Theorem~\ref{theorem:eko 1}(4), the fact that $\bar\h$ is constant on rays, and the definitions of $\h_1$ and $F$, we have:
\begin{align*}
 (2g-2) h(\psi_g) = \left(\frac{g-1}{g+1}\right) \bar\h(\alpha_g) \leq \left(\frac{g-1}{g+1}\right) \bar\h(3,g+1) 
  = F(3/(g+1))
\end{align*}
as desired.

\medskip

\emph{Claim 4.} $F(t) < \log(\varphi^4)$ for $0 < t \leq 3/7$.

\medskip

First of all, we can easily compute from Theorem~\ref{theorem:eko 1} that $F(0)  = \h_1(0) = \log(\varphi^4)$.  Therefore, it is enough to show that $F'(t)$ is strictly negative for  $0 < t \leq 3/7$.  Applying the product rule we calculate
\[ F'(t)= \h_1'(t) \left(1-\tfrac{2}{3}t\right) -\tfrac{2}{3}\h_1(t). \]
By Theorem~\ref{theorem:eko 1}(4) and the convexity of $\h_1$ we know that $\h_1(t)$ and $\h_1'(t)$ are increasing on $[0,1)$.  Thus, for all $t \in (0,3/7]$ we have
\[ F'(t) \leq \h_1'(3/7)(1-0) - \tfrac23 \h_1(0) = \h_1'(3/7) - \tfrac23 \log(\varphi^4).\]
Because $\h_1'(t)$ is increasing, the mean value theorem implies that $\h_1'(3/7)$ is bounded from above by any difference quotient $\frac{\h_1(3/7+\Delta t)-\h_1(3/7)}{\Delta t}$ with $\Delta t > 0$.  Doing this with $3/7 + \Delta t = 1/2 $ and estimating we obtain
\begin{eqnarray*} F'(t) & \leq & \frac{\h_1(1/2)-\h_1(3/7)}{1/2-3/7} - \frac{2}{3} \log(\varphi^4)\\
& \leq & 1.06-1.28 \\
& < & 0,
\end{eqnarray*}
which gives the claim.

\medskip

Now suppose $g \geq 6$, so that $3/(g+1) \leq 3/7$.  By Claims 3 and 4 we have $(2g-2) h(\psi_g) < \log(\varphi^4)$.  Combined with Claim 2, this completes the proof.
\end{proof}

We remark that by Claim 1 above and Hironaka's result that $\limsup g \cdot h(\psi_g) = \log (  \varphi^2 )$, it follows that $ \log (  \varphi^2 )$ is the least upper bound for the set of normalized entropies of the $\psi_g$.

\bibliographystyle{plain}
\bibliography{geography}

\end{document}